\documentclass[11pt]{amsart}
\usepackage{float}
\usepackage{amsfonts, amstext, amsmath, amsthm, amscd, amssymb, upgreek}
\usepackage{bbm}
\usepackage{graphicx, color,  subfigure, wrapfig, overpic}
\usepackage{fullpage}

\usepackage{tikz}
\usetikzlibrary{shapes.geometric, arrows}
\usetikzlibrary{shapes.multipart} 
\usetikzlibrary{calc}
\usetikzlibrary{scopes}
\usetikzlibrary{math}
\usetikzlibrary{decorations.markings,decorations.pathreplacing}
\usetikzlibrary{fadings}
\usepackage{tikz-cd}

\setlength{\textwidth}{\paperwidth}
\addtolength{\textwidth}{-2in}
\setlength{\textheight}{\paperheight}
\addtolength{\textheight}{-2in}
\calclayout

\tikzset{
every picture/.style={line width=0.8pt, >=stealth,
                       baseline=-3pt,label distance=-3pt},
emptynode/.style={circle,minimum size=0pt, inner sep=0pt, outer
sep=0},
dotnode/.style={fill=black,circle,minimum size=2.5pt, inner sep=1pt, outer
sep=0},
small_dotnode/.style={fill=black,circle,minimum size=2pt, inner sep=0pt, outer
sep=0},
morphism/.style={fill=white,circle,draw,thin, inner sep=1pt, minimum size=15pt,
                 scale=0.8},
small_morphism/.style={fill=white,circle,draw,thin,inner sep=1pt,
                       minimum size=10pt, scale=0.8},
ellipse_morphism/.style args={#1}{fill=white,circle,draw,thin,inner sep=1pt,
                       minimum size=5pt, scale=0.8,
												ellipse, draw, rotate=#1},
coupon/.style={draw,thin, inner sep=1pt, minimum size=18pt,scale=0.8},
semi_morphism/.style args={#1,#2}{
                  fill=white,semicircle,draw,thin, inner sep=1pt, scale=0.8,
                  shape border rotate=#1,
                  label={#1-90:#2}},
regular/.style={densely dashed}, 
edge/.style={very thick, draw=green, text=black},
overline/.style={preaction={draw,line width=2mm,white,-}},
thin_overline/.style={preaction={draw,line width=#1 mm,white,-}},
thin_overline/.default=2,
thick_overline/.style={preaction={draw,line width=3mm,white,-}},
really_thick/.style={line width=3mm, gray},
boundary/.style={thick,  draw=blue, text=black},
ribbon/.style={line width=1.5mm, postaction={draw,line width=1mm,white}},
ribbon_u/.style args={#1,#2}{line width=#1mm, postaction={draw,line width=#2mm,white}},
cell/.style={fill=black!10},
subgraph/.style={fill=black!30},
midarrow/.style={postaction={decorate},
                 decoration={
                    markings,
                    mark=at position #1 with {\arrow{>}},
                 }},
midarrow/.default=0.5,
midarrow_rev/.style={postaction={decorate},
                 decoration={
                    markings,
                    mark=at position #1 with {\arrow{<}},
                 }},
midarrow_rev/.default=0.5,
midarrow_thin/.style={postaction={decorate},
                 decoration={
                    markings,
                    mark=at position #1 with {\arrow{Classical TikZ
											Rightarrow[length=2pt]}},
                 }},
midarrow_thin/.default=0.5,
block/.style={rectangle, rounded corners, text centered, draw=black, align=center}
}

\tikzstyle{block} = [rectangle, rounded corners, text centered, draw=black, align=center]

\tikzfading[name=fade inside, inner color=transparent!80, outer
color=transparent!10]

\definecolor{light-gray}{gray}{0.9}
\definecolor{med-gray}{gray}{0.6}


\newcommand{\thmref}[1]{Theorem \ref{#1}}
\newcommand{\prpref}[1]{Proposition \ref{#1}}
\newcommand{\lemref}[1]{Lemma \ref{#1}}
\newcommand{\defref}[1]{Definition \ref{#1}}
\newcommand{\figref}[1]{Figure \ref{#1}}
\newcommand{\secref}[1]{Section \ref{#1}}

\newcommand{\comment}[1]{}

\textwidth 6.07in 
\textheight 8.6in 
\oddsidemargin 0.18in
\evensidemargin 0.18in
 
\setlength{\marginparwidth}{0.8in}
\let\oldmarginpar\marginpar
\renewcommand\marginpar[1]{\oldmarginpar[\raggedleft\footnotesize #1]%
{\raggedright\footnotesize #1}}

\AtBeginDocument{
   \def\MR#1{}
}
\newcommand{\Sp}{{S}}

\newcommand{\R}{\mathbb{R}}

\newcommand{\torus}{{\mathbb{T}^2}}
\newcommand{\sT}{{\mathcal{T}}}
\newcommand{\cD}{{\mathcal{D}}}
\newcommand{\cL}{{\mathcal{L}}}

\newcommand{\cev}[1]{\overset{\leftarrow}{#1}}

\newcommand{\del}{\partial}

\newcommand{\vphi}{\varphi}
\newcommand{\veps}{\varepsilon}

\newcommand{\toruscomp}[1]{{\torus \times I - #1}}

\theoremstyle{plain}
\newtheorem{theorem}{Theorem}[section]

\newtheorem{lemma}[theorem]{Lemma}
\newtheorem{prop}[theorem]{Proposition}

\newtheorem{convention}[theorem]{Convention}

\newtheorem*{namedtheorem}{\theoremname}
\newcommand{\theoremname}{testing}

\theoremstyle{definition}
\newtheorem{define}[theorem]{Definition}
\newtheorem{definition}[theorem]{Definition}

\newtheorem{remark}[theorem]{Remark}

\newcommand{\cm}{,\linebreak[1]}

\title{Hyperbolicity of Augmented Links in the Thickened Torus}

\author[Alice Kwon and Ying Hong Tham]{Alice Kwon and Ying Hong Tham}

\begin{document}
\maketitle


\begin{abstract}
For a hyperbolic link $K$ in the thickened torus,
we show there is a decomposition of the complement of a link $L$,
obtained from augmenting $K$, into torihedra. We further decompose 
the torihedra into angled pyramids and finally angled tetrahedra.
These fit into an angled structure on a triangulation of the link complement,
and thus by \cite{Casson-Rivin}, this shows
that $L$ is hyperbolic.  
\end{abstract}

\section{Introduction}
\label{s:intro}

Given a twist-reduced diagram of a link $K$, \emph{augmenting} is a process in
which an unknotted circle component (augmentation) is added to one or more twist
regions (a single crossing or a maximal string of bigons) of $K$.
The newly obtained link is called an 
\emph{augmented link} and the newly obtained diagram is called an 
\emph{augmented link diagram}. See Figure
\ref{fig:Augmentations}.

Adams showed in \cite{CA} that given a hyperbolic alternating link $K$ in
$\Sp^3$ the link $L$ obtained by augmenting $K$ is hyperbolic. In this paper we
investigate if this statement holds for links in the thickened torus i.e. if $L$
is a link obtained from augmenting a hyperbolic alternating link $K$ in the
thickened torus. We define augmenting similarly for links in the thickened torus
with their associated link diagram on $\torus \times \{0\}.$

Menasco \cite{Menasco} showed that there are decompositions
of the complements of alternating links in $\Sp^3$
into two topological polyhedra,
a top polyhedron and a bottom polyhedron.
For alternating links $K$ in the thickened torus,
Champanerkar, Kofman and Purcell \cite{CKP2}
showed that there is a decomposition of the
complement of $K$ into objects called torihedra, which we think of as
counterparts to Menasco's decomposition 
for links in the thickened torus;
just like Menasco's decomposition, one obtains a top and a bottom torihedron.

In \secref{s:auglinks} we show that for augmented
links in the thickened torus (not necessarily fully augmented),
one can also obtain a decomposition of the
complement into a top and bottom torihedron.
In \secref{s:hyperbolicity}, we prove that
many augmented alternating links in the thickened torus are hyperbolic.

We point out that in \cite{kwon2020fully}, the first author
proved that \emph{fully} augmented links in the thickened torus
are hyperbolic, so this paper can be seen as a generalization
of that work.

While revising this paper, we learned that \cite{adams2021generalized}
proves a generalization of our work here,
showing hyperbolicity of generalized augmented links
in an arbitrary thickened surface.
We note that our approach, based on angle structures,
is different from theirs, which is based on topological arguments.

\begin{figure}
\centering  
\begin{tabular}{cc}
\includegraphics[height=4cm]{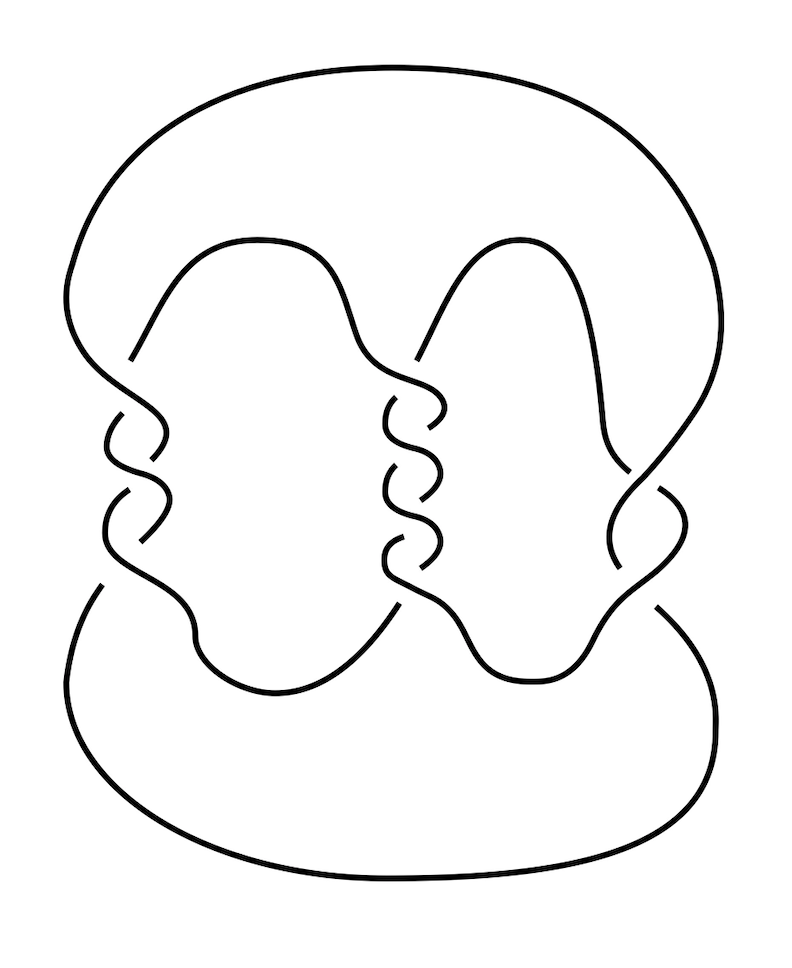}
\;\;\;\;
& \includegraphics[height=4cm]{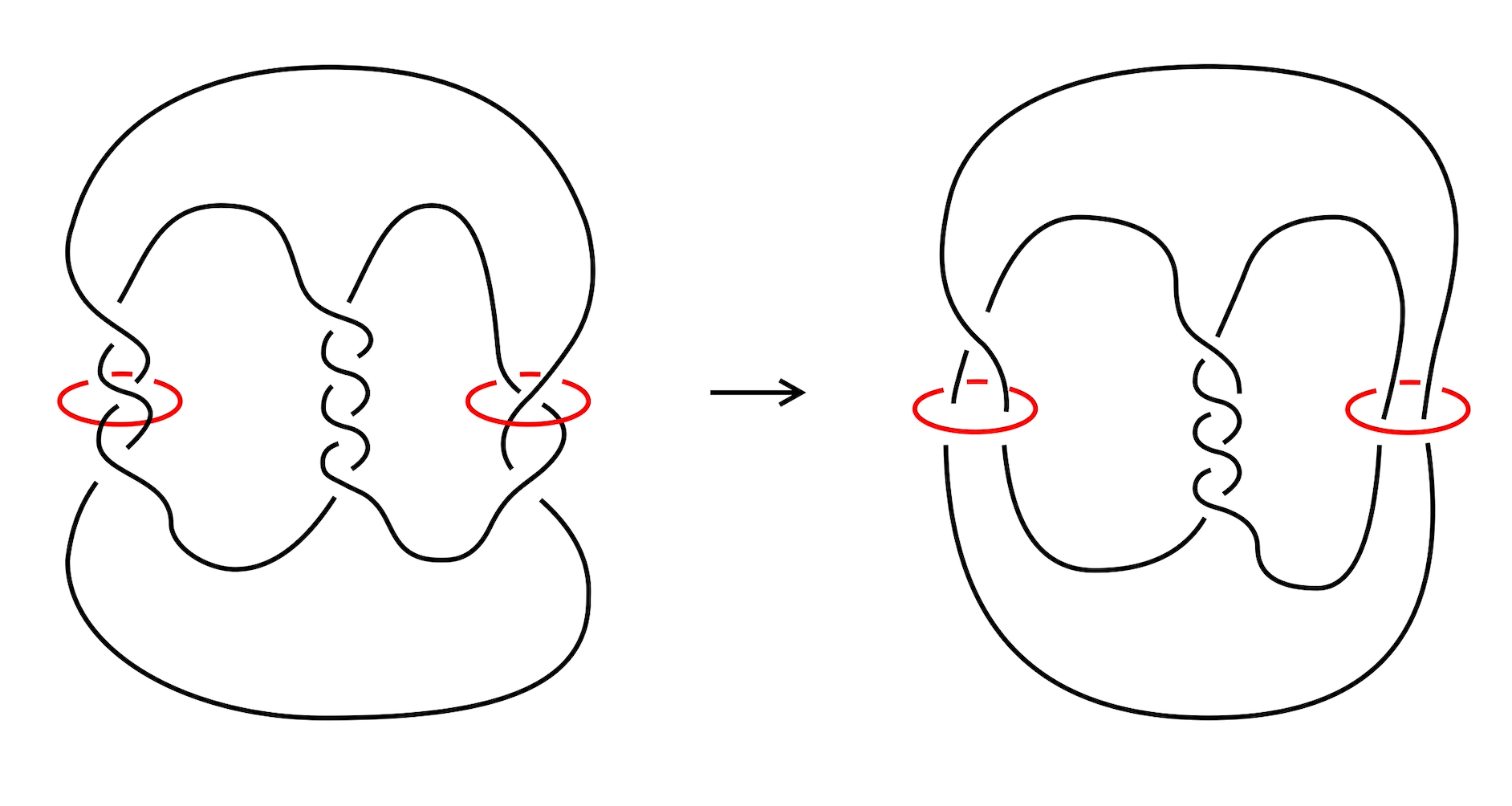}
\\
(a)
& (b)
\end{tabular}
\caption{(a) pretzel knot before augmentation
(b) pretzel knot after augmentation;
second diagram shows
removal of full twists in the augmented twist regions.}
\label{fig:augmentationS3}
\end{figure}

\section{Augmented Links}
\label{s:auglinks}

We denote $I = (-1,1)$.

Champanerkar, Kofman and Purcell have studied alternating links in the thickened
torus \cite{CKP2}. They define a link in the thickened torus as a quotient of a
biperiodic alternating link as follows:
 
\begin{define}
\label{def:biperiodiclink}
A \emph{biperiodic alternating link} $\cL$ is an infinite link
in $\R^2 \times I$ with a link diagram $\cD \subset \R^2$
such that $\cL$ and $\cD$ are
invariant under the action of a two dimensional lattice $\Lambda$
on $\R^2$ by translations.

The quotient $L=\mathcal{L}/\Lambda$ is an alternating link in
the thickened torus $\torus \times I$,
whose projection onto $\torus \times \{0\} = \R^2 \times \{0\} /\Lambda$
is an alternating link diagram $\cD/\Lambda$.
\end{define}

We refer to $\torus \times \{0\}$ as the \emph{projection plane}.

\begin{remark}
Since $\torus \times I \cong \Sp^3 - H$, where $H$ is a Hopf link.
The complement $\torus \times I- L = \Sp^3 - (L \cup H)$.
\end{remark}

Champanerkar, Kofman and Purcell \cite{CKP2} extended
the definition of prime links in $\Sp^3$
for links in $\torus \times I$ called weakly prime. 

\begin{define} \label{def:weaklyprime}
A diagram $D \subset \torus$
of a link $L$ in the thickened torus $\torus \times I$
is \emph{weakly prime}
if whenever a disk is embedded in $\torus$
meets the diagram transversely in exactly two edges,
then the disk contains a simple edge of the diagram and no crossings.
\end{define}

\begin{define}
Recall that a \emph{twist region}
in a link diagram in the plane is a maximal sequence of vertices
such that consecutive vertices are two vertices of a bigon face,
and consecutive bigons meet at exactly one vertex.
The \emph{length} of the twist region is the number of bigons.

We say a single vertex $v$ is a \emph{trivial twist region}
(or \emph{twist region of length 0})
if, for some cyclical ordering $f_1,f_2,f_3,f_4$
of the four faces adjacent to $v$,
$f_1,f_3$ are not bigons.
Note that if all four faces are not bigons,
then we think of $v$ as being a trivial twist region in two ways,
and in this sense, every vertex is part of exactly
two twist regions.
%

For links in the thickened torus,
a \emph{twist region} in a link diagram of $L=\mathcal{L}/\Lambda$ in $\torus
\times I$, is the quotient of a twist region in the biperiodic link
$\mathcal{L}$. 
\end{define}

\begin{define}
A biperiodic link $\mathcal{L}$ is called \emph{twist-reduced}
if for any simple closed curve on
the plane that intersects the diagram of $\mathcal{L}$
transversely in four points,
with two points adjacent to one crossing
and the other two points adjacent to another crossing,
the simple closed curve bounds a subdiagram consisting of
a collection of bigons strung end to end between these crossings.
We say the diagram of $L$ is \emph{twist-reduced}
if it is the quotient of a twist-reduced biperiodic link diagram.
\end{define}

Now we can define augmentation for a link in $\torus \times I$ the same way we
define augmentation for links in $\Sp^3$:

\begin{definition}
Let $D(K)$ be a twist-reduced diagram of a link $K$ in $\torus \times I$.
We define
\emph{augmenting} as a process in which an unknotted circle component,
called a \emph{crossing circle},
is added to one or more twist regions of $D(K)$
(see Figure \ref{fig:Augmentations});
we call the resulting link $L$ an \emph{augmented link obtained from $K$}.
We say $L$ is \emph{fully augmented} if $L$ is obtained by augmenting
$K$ at \emph{every} crossing/twist region.
\label{d:augmentation}
\end{definition}

As pointed out in the introduction,
after augmenting a twist region,
a standard Dehn twist argument allows us to remove
a full twist (that is, two bigons).

\begin{definition}
We say an augmentation has a \emph{half twist}
if at least one of the augmented twist regions
has an odd number of vertices (i.e. even number of bigons).
\label{d:half-twist}
\end{definition}

\begin{definition}
A graph $G = (V,E)$ on the torus is \emph{cellular}
if its complement is a collection of open disks.
\label{d:cellular}
\end{definition}

We note that when a link diagram is cellular,
a twist region in the torus cannot be a cycle;
otherwise, the face adjacent to the twist region would
have non-trivial homology.



\begin{figure}
\centering
\begin{tabular}{cccc}
\includegraphics[width=3cm]{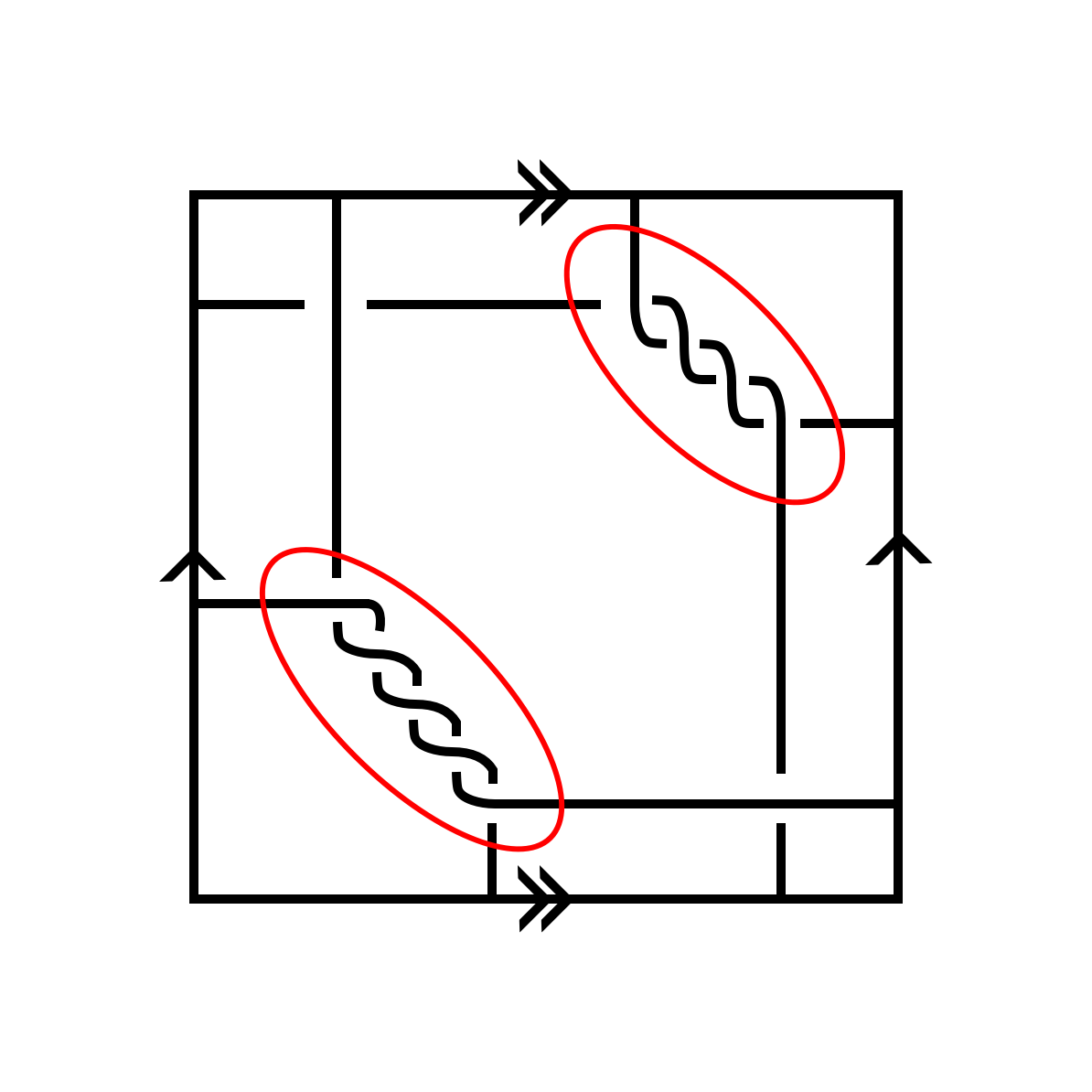}&
\includegraphics[width=3cm]{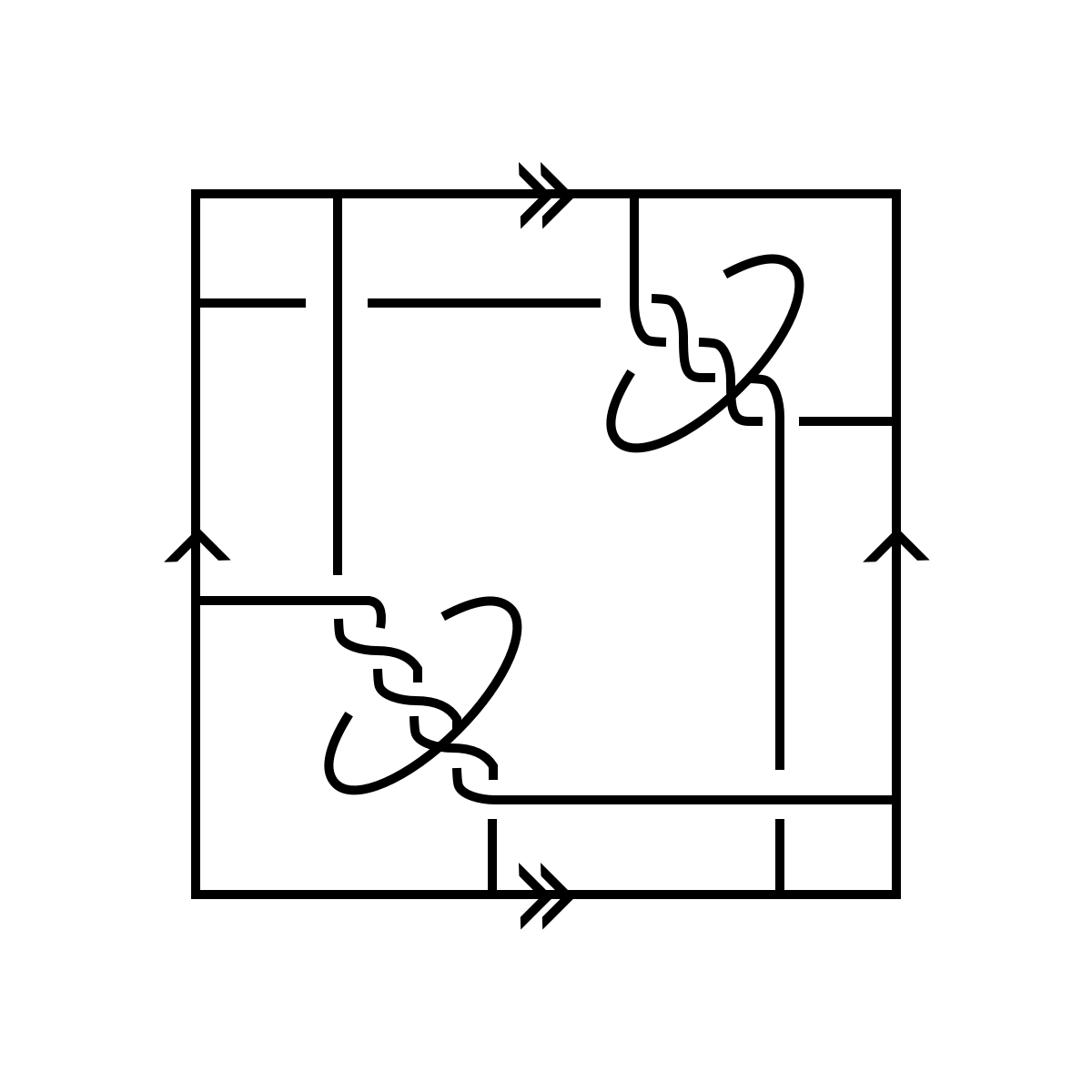}&
\includegraphics[width=3cm]{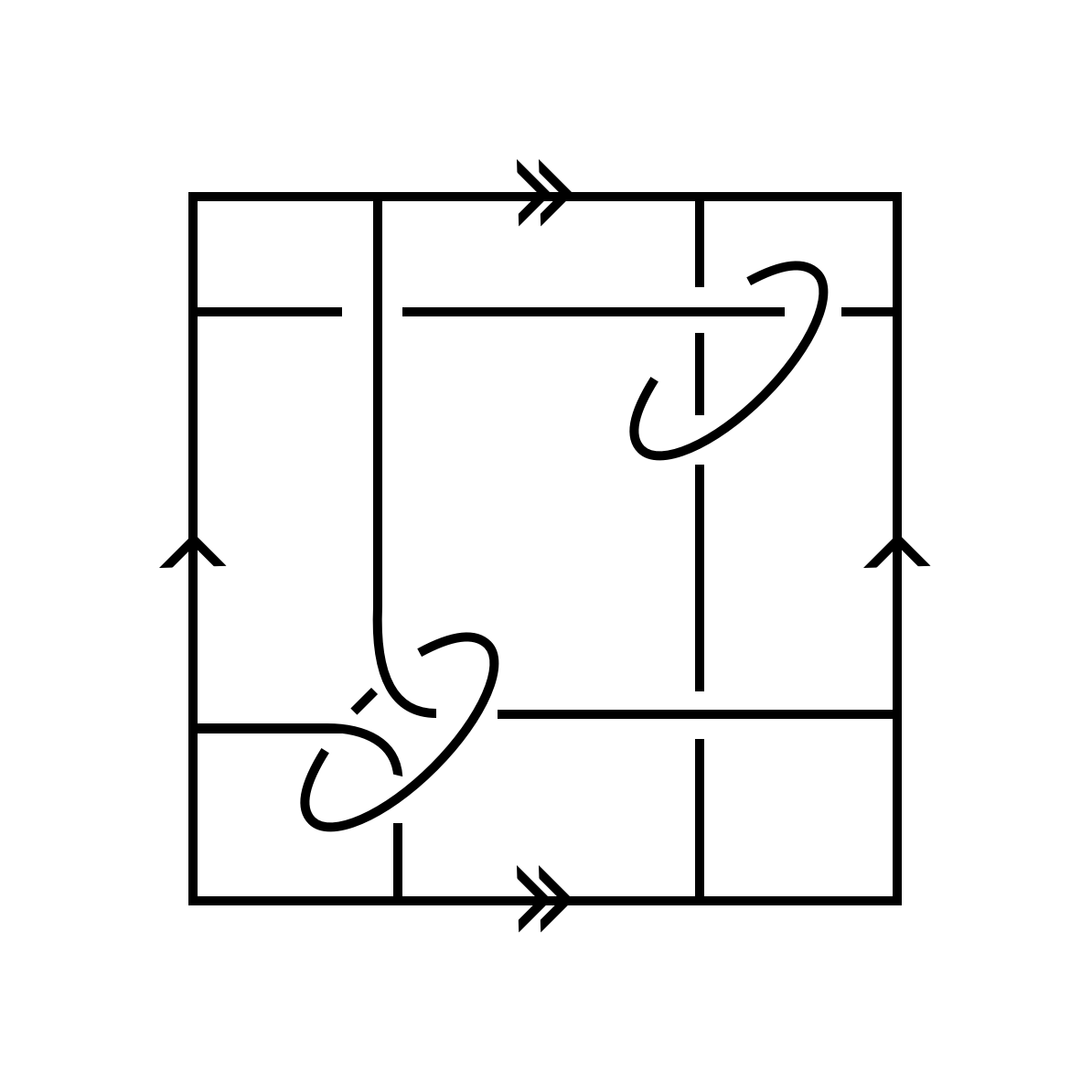}\\
A&B&C
\end{tabular}
	 \caption{A: The top right has an odd number of twists while the bottom left has
	 an even number of twists. B: The picture of the link on the right after
	 augmentation twist regions circled in red. C: The link with full twists
	 removed.}
\label{fig:Augmentations}
\end{figure}

\subsection{Torihedral Decomposition of Augmented Alternating Links in Thickened Torus}

We present a method of decomposing an augmented link
(not necessarily fully augmented) in the thickened torus into
objects called ``torihedra'' as defined below.
Decomposing alternating links in the thickened torus into 
torihedra were first described in \cite{CKP2},
then later used for fully augmented links in 
the thickened torus in \cite{kwon2020fully}.
The idea is to combine methods of Menasco \cite{Menasco}
and the use of crossing edges at each crossing of our link
and Lackenby's ``cut-slice-flatten" method \cite{lackenby}
on the augmentation sites.

\begin{define}\cite{CKP2}
\label{def:torihedron}
A \emph{torihedron} $\sT$ is a cone on the torus, 
i.e. $\torus \times [0,1]/(\torus \times \{1\})$, with a cellular graph
$G = G(\sT)$ on $\torus \times \{0\}$.
The \emph{ideal torihedron} $\sT^\circ$ is $\sT$ with the
vertices of $G$ and the vertex $\torus \times \{1\}$ removed. Hence, an ideal
torihedron is homeomorphic to $\torus \times [0,1)$ with a finite set of points
(ideal vertices) removed from $\torus \times \{0\}$.
We refer to the vertex $\torus \times \{1\}$ as the \emph{cone point}
of $\sT$.
\end{define}

For visualization purposes, we typically draw the graph $G(\sT)$ of a
torihedron from the perspective of the cone point $\torus \times \{1\}$.
Note however that later we will be dealing with
``top'' and ``bottom'' torihedra that are glued together
along their torus boundary faces;
to avoid confusion, we will visualize the graphs of both torihedra
from the perspective of the cone point of the ``top'' torihedron.

Since the faces of $G(\sT)$ are disks,
$\sT$ can be decomposed into a union of pyramids,
where each pyramid is obtained by coning a face of $G(\sT)$
to the cone point of $\sT$.
This also gives a decomposition of the corresponding ideal torihedron
$\sT^\circ$ into ideal pyramids.
We call these the \emph{pyramidal decompositions} of $\sT$ and $\sT^\circ$.

\begin{definition}
Let $G$ be a graph on the torus.
Let $v$ be a vertex and $e,e'$ be distinct edges that meet $v$.
A \emph{bow-tie modification to $v,e,e'$}
is the process of removing $v,e,e'$ and adding in
a pair of triangular faces, which we refer to as
\emph{bow-tie faces} or a \emph{bow-tie}
(see Figure \ref{f:bow-tie-modification},
\ref{f:bow-tie-modification-general}).
The edges of the bow-tie are of three types;
\emph{diagonal} edges, which do not touch $v$,
\emph{long} edges, which are ``parallel'' to the original edges $e,e'$,
and \emph{short} edges.
A bow-tie modification is a \emph{left} bow-tie modification
if, traveling from $v$ out along $e$ and $e'$,
the diagonal faces appear to the left of $e$ and $e'$,
respectively;
likewise for \emph{right} bow-tie modifications.

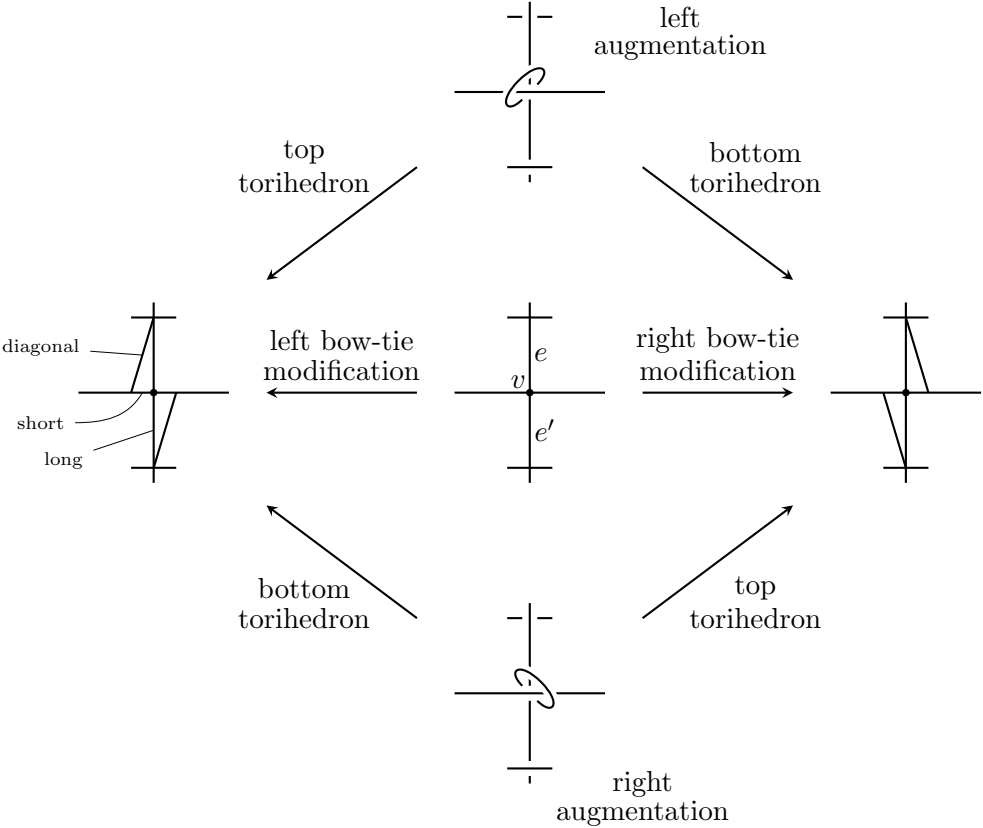
\begin{figure}[ht]
\begin{tikzpicture}
\begin{scope}[shift={(0,0)}]
\node[dotnode] at (0,0) {};
\draw (-1,0) -- (1,0);
\draw (0,-1.2) -- (0,1.2);
\draw (-0.3,1) -- (0.3,1);
\draw (-0.3,-1) -- (0.3,-1);
\node at (-0.15,0.15) {$v$};
\node at (0.15,0.5) {$e$};
\node at (0.2,-0.5) {$e'$};
\draw[->] (-1.5,0) -- (-3.5,0);
\node at (-2.5,0.7) {left bow-tie};
\node at (-2.5,0.3) {modification};
\draw[->] (1.5,0) -- (3.5,0);
\node at (2.5,0.7) {right bow-tie};
\node at (2.5,0.3) {modification};
\end{scope}
\begin{scope}[shift={(-5,0)}]
\node[dotnode] at (0,0) {};
\draw (-1,0) -- (1,0);
\draw (0,-1.2) -- (0,1.2);
\draw (-0.3,1) -- (0.3,1);
\draw (-0.3,-1) -- (0.3,-1);
\draw (-0.3,0) -- (0,1);
\draw (0.3,0) -- (0,-1);
\node (diag) at (-1.5,0.6) {\tiny diagonal};
\node (short) at (-1.5,-0.4) {\tiny short};
\node (long) at (-1.2,-0.9) {\tiny long};
\draw[line width=0.3pt] (diag) -- (-0.15,0.5);
\draw[line width=0.3pt] (short) to[out=0,in=-120] (-0.15,0);
\draw[line width=0.3pt] (long) -- (0,-0.5);
\end{scope}
\begin{scope}[shift={(5,0)}]
\node[dotnode] at (0,0) {};
\draw (-1,0) -- (1,0);
\draw (0,-1.2) -- (0,1.2);
\draw (-0.3,1) -- (0.3,1);
\draw (-0.3,-1) -- (0.3,-1);
\draw (0.3,0) -- (0,1);
\draw (-0.3,0) -- (0,-1);
\end{scope}
\begin{scope}[shift={(0,4)}]
\node[dotnode] at (0,0) {};
\draw (-0.3,1) -- (0.3,1);
\draw[overline] (0,-1.2) -- (0,1.2);
\draw[overline] (-1,0) -- (1,0);
\draw[overline] (-0.3,-1) -- (0.3,-1);
\draw[thin_overline={1.5}] (-0.1,-0.1)
	.. controls +(-135:0.3cm) and +(-135:0.4cm) .. (-0.15,0.15)
	.. controls +(45:0.4cm) and +(45:0.3cm) .. (0.1,0.1);
\draw[->] (-1.5,-1) -- (-3.5,-2.5);
\node at (-3,-0.8) {top};
\node at (-3,-1.2) {torihedron};
\draw[->] (1.5,-1) -- (3.5,-2.5);
\node at (3,-0.8) {bottom};
\node at (3,-1.2) {torihedron};
\node at (2,1) {left};
\node at (2,0.6) {augmentation};
\end{scope}
\begin{scope}[shift={(0,-4)}]
\node[dotnode] at (0,0) {};
\draw (-0.3,1) -- (0.3,1);
\draw[overline] (0,-1.2) -- (0,1.2);
\draw[overline] (-1,0) -- (1,0);
\draw[overline] (-0.3,-1) -- (0.3,-1);
\draw[thin_overline={1.5}] (-0.1,0.1)
	.. controls +(135:0.3cm) and +(135:0.4cm) .. (0.15,0.15)
	.. controls +(-45:0.4cm) and +(-45:0.3cm) .. (0.1,-0.1);
\draw[->] (-1.5,1) -- (-3.5,2.5);
\node at (-3,1.4) {bottom};
\node at (-3,1) {torihedron};
\draw[->] (1.5,1) -- (3.5,2.5);
\node at (3,1.4) {top};
\node at (3,1) {torihedron};
\node at (1.5,-1.2) {right};
\node at (1.5,-1.6) {augmentation};
\end{scope}
\end{tikzpicture}
\caption{Bow-tie modifications and its relation to torihedra}
\label{f:bow-tie-modification}
\end{figure}

\begin{figure}[ht]
\begin{tikzpicture}
\begin{scope}[shift={(0,0)}]
\node[dotnode] at (0,0) {};
\draw (-1,0) -- (0,0);
\draw (-1,-0.4) -- (0,0);
\draw (0,-1.2) -- (0,1);
\draw (0,1) -- (0.2,1.2);
\draw (0,1) -- (-0.2,1.2);
\draw (-0.3,-1) -- (0.3,-1);
\node at (-0.15,0.15) {$v$};
\node at (0.15,0.5) {$e$};
\node at (0.2,-0.5) {$e'$};
\end{scope}
\begin{scope}[shift={(2.5,0)}]
\draw[->] (-1,0) -- (1,0);
\node at (0,0.7) {left bow-tie};
\node at (0,0.3) {modification};
\end{scope}
\begin{scope}[shift={(5,0)}]
\node[dotnode] at (0,0) {};
\draw (-1,0) -- (0.3,0);
\draw (-1,-0.4) -- (-0.3,0);
\draw (0,-1.2) -- (0,1);
\draw (0,1) -- (0.2,1.2);
\draw (0,1) -- (-0.2,1.2);
\draw (-0.3,-1) -- (0.3,-1);
\draw (-0.3,0) -- (0,1);
\draw (0.3,0) -- (0,-1);
\end{scope}
\end{tikzpicture}
\caption{Another example of (left) bow-tie modification}
\label{f:bow-tie-modification-general}
\end{figure}

\label{d:diagram-bowtie}
\end{definition}

\begin{define}
We say a twist region is \emph{right-augmented} if,
when both strands are (locally) oriented
so that they cross the augmentation disk in the same direction,
the crossing is a right-handed half-twist.
We say a twist is \emph{left-augmented} if it is not right-augmented.
(See \figref{f:bow-tie-modification}).
\end{define}

In other words, we have the tautological-sounding fact that
augmenting a right-handed twist region
is a right-handed augmentation.

We can recover $L$ from the link diagram of $K$
together with labels at twist regions indicating left- or right-augmentation.

\begin{definition}
\label{d:tori-decomp-graph}
Let $L$ be a link obtained from augmenting an alternating link $K$
with a cellular link diagram $D = D(K)$.
We define the \emph{top/bottom bow-tie graph of $L$} as follows.
(See \figref{f:tori-decomp-graph} for illustrations.)

Let $D'$ be the graph obtained from $D$
by collapsing each augmented twist region of $K$ to a vertex.
Clearly, $D'$ is the link diagram of a link $K'$
obtained from $K$ by removing half-twists from each augmented twist region
until one crossing remains.
Let $v_t$ denote a vertex of $D'$ corresponding to
an augmented twist region of $K$,
and let $v_c$ denote a vertex of $D'$ corresponding to
a crossing of $K$ not in an augmented twist region.

Orient the edges of $D'$ to point from an undercrossing
to an overcrossing.
Label the two outgoing edges at vertex $v$
(which corresponds to a crossing or a twist region)
by $e_v^{(1)}, e_v^{(2)}$ (in arbitrary order).
For each left- (resp. right-) augmented twist region $t$,
we perform a left (resp. right) bow-tie modification to
$v_t, e_{v_t}^{(1)}, e_{v_t}^{(2)}$.

We call the resulting graph the \emph{top bow-tie graph of $L$},
denoted by $\Gamma_T(L)$.
If we had oriented the edges of $D'$ the other way,
and subsequently performed the same operations,
we obtain another graph,
which we call the \emph{bottom bow-tie graph of $L$},
denoted by $\Gamma_B(L)$.

Note that the non-bow-tie faces of $\Gamma_T(L)$ and $\Gamma_B(L)$
are naturally identified with the faces of $D'$
(as the bow-tie modification procedure does not remove faces).
\end{definition}

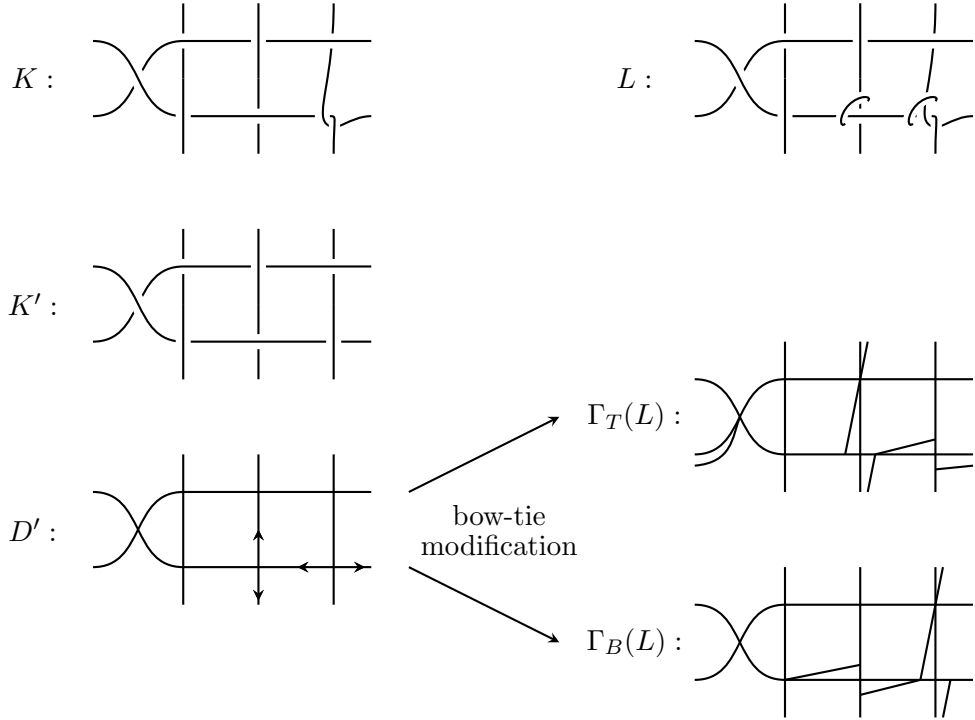
\begin{figure}[ht]
\begin{tikzpicture}
\begin{scope}[shift={(0,0)}]
\node at (-2,0.5) {$K:$};
\draw (0,-0.5) -- (0,1.5);
\draw (1,-0.5) -- (1,1.5);
\draw (2,1.5)
	.. controls +(-90:1cm) and +(135:0.2cm) .. (1.9,-0.1)
	.. controls +(-45:0.2cm) and +(180:0.3cm) .. (2.5,0);
\draw[overline] (-1.2,0)
	.. controls +(0:0.7cm) and +(180:0.7cm) .. (0,1)
	-- (2.5,1);
\draw[overline] (-1.2,1)
	.. controls +(0:0.7cm) and +(180:0.7cm) .. (0,0)
	-- (1.75,0);
\draw[thin_overline={1.5}] (1.95,0)
	.. controls +(0:0.1cm) and +(90:0.5cm) .. (2,-0.5);
\draw[overline] (0,-0.5) -- (0,0.5);
\draw[overline] (1,0.5) -- (1,1.5);
\end{scope}
\begin{scope}[shift={(8,0)}]
\node at (-2,0.5) {$L:$};
\draw (0,-0.5) -- (0,1.5);
\draw (1,-0.5) -- (1,1.5);
\draw (2,1.5)
	.. controls +(-90:1cm) and +(135:0.2cm) .. (1.9,-0.1)
	.. controls +(-45:0.2cm) and +(180:0.3cm) .. (2.5,0);
\draw[overline] (-1.2,0)
	.. controls +(0:0.7cm) and +(180:0.7cm) .. (0,1)
	-- (2.5,1);
\draw[overline] (-1.2,1)
	.. controls +(0:0.7cm) and +(180:0.7cm) .. (0,0)
	-- (1.75,0);
\draw[thin_overline={1.5}] (1.95,0)
	.. controls +(0:0.1cm) and +(90:0.5cm) .. (2,-0.5);
\draw[overline] (0,-0.5) -- (0,0.5);
\draw[overline] (1,0.5) -- (1,1.5);
\draw[thin_overline={1.5}] (0.8,-0.1)
	.. controls +(-135:0.1cm) and +(-135:0.2cm) .. (0.85,0.15)
	.. controls +(45:0.2cm) and +(45:0.1cm) .. (1.1,0.2);
\draw[thin_overline={1.5}] (1.7,-0.1)
	.. controls +(-135:0.1cm) and +(-135:0.2cm) .. (1.75,0.15)
	.. controls +(45:0.2cm) and +(45:0.1cm) .. (2,0.2);
\end{scope}
\begin{scope}[shift={(0,-3)}]
\node at (-2,0.5) {$K':$};
\draw (0,-0.5) -- (0,1.5);
\draw (1,-0.5) -- (1,1.5);
\draw (2,-0.5) -- (2,1.5);
\draw[overline] (-1.2,0)
	.. controls +(0:0.7cm) and +(180:0.7cm) .. (0,1)
	-- (2.5,1);
\draw[overline] (-1.2,1)
	.. controls +(0:0.7cm) and +(180:0.7cm) .. (0,0)
	-- (2.5,0);
\draw[overline] (0,-0.5) -- (0,0.5);
\draw[overline] (1,0.5) -- (1,1.5);
\draw[overline] (2,-0.5) -- (2,0.5);
\end{scope}
\begin{scope}[shift={(0,-6)}]
\node at (-2,0.5) {$D':$};
\draw (0,-0.5) -- (0,1.5);
\draw[midarrow_rev={0.1},midarrow={0.5}]  (1,-0.5) -- (1,1.5);
\draw (2,-0.5) -- (2,1.5);
\draw (-1.2,0)
	.. controls +(0:0.7cm) and +(180:0.7cm) .. (0,1)
	-- (2.5,1);
\draw[midarrow_rev={0.8},midarrow={0.98}] (-1.2,1)
	.. controls +(0:0.7cm) and +(180:0.7cm) .. (0,0)
	-- (2.5,0);
\draw[->] (3,1) -- (5,2);
\draw[->] (3,0) -- (5,-1);
\node at (4.2,0.7) {bow-tie};
\node at (4.2,0.3) {modification};
\end{scope}
\begin{scope}[shift={(8,-4.5)}]
\node at (-2,0.5) {$\Gamma_T(L):$};
\draw (0,-0.5) -- (0,1.5);
\draw (1,-0.5) -- (1,1.5);
\draw (2,-0.5) -- (2,1.5);
\draw (-1.2,0)
	.. controls +(0:0.7cm) and +(180:0.7cm) .. (0,1)
	-- (2.5,1);
\draw (-1.2,1)
	.. controls +(0:0.7cm) and +(180:0.7cm) .. (0,0)
	-- (2.5,0);
\draw (0.8,0) -- (1,1);
\draw (1.2,0) -- (1.1,-0.5);
\draw (1.1,1.5) -- (1,1);
\draw (1.2,0) -- (2,0.2);
\draw (2,-0.2) -- (2.5,-0.15);
\draw (-1.2,-0.15)
	.. controls +(5:0.5cm) and +(-110:0.3cm) .. (-0.6,0.5);
\end{scope}
\begin{scope}[shift={(8,-7.5)}]
\node at (-2,0.5) {$\Gamma_B(L):$};
\draw (0,-0.5) -- (0,1.5);
\draw (1,-0.5) -- (1,1.5);
\draw (2,-0.5) -- (2,1.5);
\draw (-1.2,0)
	.. controls +(0:0.7cm) and +(180:0.7cm) .. (0,1)
	-- (2.5,1);
\draw (-1.2,1)
	.. controls +(0:0.7cm) and +(180:0.7cm) .. (0,0)
	-- (2.5,0);
\draw (0,0) -- (1,0.2);
\draw (1,-0.2) -- (1.8,0);
\draw (1.8,0) -- (2,1);
\draw (2.2,0) -- (2.1,-0.5);
\draw (2.1,1.5) -- (2,1);
\end{scope}
\end{tikzpicture}
\caption{Constructing the top and bottom bow-tie graphs,
using bow-tie modifications}
\label{f:tori-decomp-graph}
\end{figure}

\begin{prop}\label{p:tori_decomp}
Let $K$ be an alternating link in the thickened torus
with a cellular link diagram,
and let $L$ be an augmented link obtained from $K$.
There is a decomposition of the complement,
$(\torus \times I) - L$, into two ideal torihedra.

Moreover, the graphs of the torihedra are
the top and bottom bow-tie graphs from
\defref{d:tori-decomp-graph},
$\Gamma_T(L)$ and $\Gamma_B(L)$,
respectively.

We call this the
\emph{torihedral decomposition of
the link complement of $L$ in the thickened torus}.
\end{prop}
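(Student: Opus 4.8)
The plan is to construct the decomposition by hand in the spirit of Menasco, handling the crossings of $K$ with Menasco-style crossing balls and the crossing circles of $L$ with Lackenby's cut-slice-flatten move \cite{lackenby}, and then reading off the combinatorics from the two local pictures. First I would put $L$ in a convenient position: away from the twist regions of $K$ and from the crossing circles, $L$ lies on the projection plane $P = \torus \times \{0\}$; at each crossing of $K$ not in an augmented twist region I insert a crossing ball, with the overstrand bulging into $\torus \times (0,1)$ and the understrand into $\torus \times (-1,0)$, as in \cite{Menasco, CKP2}; and at each augmented twist region I use the crossing circle to perform a Dehn twist along the crossing disk (an isotopy of $L$) removing all full twists, so that the two strands there carry at most one half-twist and are encircled by a round crossing circle $C$. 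I position $C$ in a plane perpendicular to $P$, so that it bounds a crossing disk $D_C$ meeting $L$ transversely in the two points where the encircled strands pierce it, with $\partial D_C \subset L$ and $D_C \cap P$ a single arc.

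Next I would cut $M = (\torus \times I) - L$ along $P - L$ and along the disks $D_C$. Since $D$ is cellular, no twist region is a cycle and $P - L$ does not separate either of the slabs $\torus \times (0,1)$ and $\torus \times (-1,0)$, so --- just as in \cite{Menasco, CKP2, lackenby} --- the cut manifold has exactly two components: a ``top'' piece $\sT_T$ obtained from $\torus \times [0,1)$ and a ``bottom'' piece $\sT_B$ obtained from $\torus \times (-1,0]$. Coning the free torus boundary $\torus \times \{1\}$ (resp.\ $\torus \times \{-1\}$) to a cone point, and deleting the ideal points left on $P$ by the crossings, the encircled strands and the crossing circles, exhibits $\sT_T$ and $\sT_B$ as ideal torihedra in the sense of \defref{def:torihedron}; here one needs only that every face appearing on $P$ is a disk --- which holds for the faces identified with faces of $D'$ because $D$ is cellular and its twist regions are not cycles, and is clear for the triangular faces produced below --- so that the cone on the torus is genuinely a union of pyramids.

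It remains to identify the boundary graphs with $\Gamma_T(L)$ and $\Gamma_B(L)$. On the part of $P$ away from the crossing balls and crossing disks the trace of the cut is a subgraph of $D$, and after collapsing each augmented twist region it becomes a subgraph of $D'$; a crossing ball at a non-augmented crossing $v_c$ contributes the crossing edge there, and orienting the edges of $D'$ from undercrossing to overcrossing reproduces the convention of \defref{d:tori-decomp-graph}. At an augmented twist region, I claim that cut-slice-flatten applied to $D_C$ is precisely a bow-tie modification at the corresponding vertex $v_t$ of $D'$ in the sense of \defref{d:diagram-bowtie}: cutting and slicing the twice-punctured disk $D_C$ along the arc joining its punctures (and along $D_C \cap P$), then flattening the resulting pieces onto $P$, replaces the neighborhood of $v_t$ by the two triangular faces of a bow-tie --- whose diagonal edge is the slicing arc, whose short edges lie on the crossing-circle cusp, and whose long edges run parallel to the two outgoing edges $e_{v_t}^{(1)}, e_{v_t}^{(2)}$ of $D'$ --- and the handedness of the retained half-twist, or its absence, is what dictates whether the diagonal faces lie to the left or to the right of $e_{v_t}^{(1)}, e_{v_t}^{(2)}$, matching the left/right-augmented dichotomy. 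Granting this, the graph bounding $\sT_T$ is $\Gamma_T(L)$; exchanging the roles of top and bottom (equivalently, reversing the chosen orientation of $D'$) turns $\sT_T$ into $\sT_B$ and $\Gamma_T(L)$ into $\Gamma_B(L)$, completing the plan.

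The step I expect to be the main obstacle is this last local analysis at the augmentation sites: one must check that cut-slice-flatten really yields two triangular faces in each torihedron (rather than a single face or a bigon), that a retained half-twist is correctly absorbed into the left/right bow-tie type rather than forcing an extra crossing edge, and that where a non-augmented twist region abuts an augmented one the Menasco and Lackenby local pictures glue up consistently along their shared edges. The remaining steps are routine adaptations of the arguments in \cite{Menasco, CKP2, lackenby}.
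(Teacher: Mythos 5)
Your proposal is essentially the paper's own argument: position crossing circles perpendicular to the projection plane, handle crossings \`a la Menasco with crossing edges/balls, and apply Lackenby's cut-slice-flatten to each crossing disk, then observe that the trace on the projection plane is exactly the bow-tie modification of $D'$ at the corresponding vertex. The one organizational difference is that you remove full twists by a Dehn twist along each crossing disk before cutting, whereas the paper keeps the twists and untwists the sliced disk pieces $B_c^{(j)}$ (rotating $180^\circ$ per crossing) during the flattening step; both produce the same graphs $\Gamma_T(L),\Gamma_B(L)$ since left/right-augmentation is preserved, with the parity of crossings absorbed into the face-gluing pattern rather than the combinatorics of the graph. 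One terminological slip: a Dehn twist along the crossing disk is not an isotopy of $L$ in $\torus\times I$, but a homeomorphism of the complement onto the complement of a different link; since only the complement matters for \prpref{p:tori_decomp}, this does not affect the argument, but the parenthetical ``(an isotopy of $L$)'' should be corrected. The local analysis at augmentation sites that you flag as the main obstacle is exactly what the paper spells out in \figref{f:cut-slice-flatten}, and your sketch of it is on the right track.
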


\begin{proof}
As mentioned before, we will be combining
Menasco's method using crossing edges at each crossing
and Lackenby's ``cut-slice-flatten'' method
on augmentation sites.

Let $L = K \cup C$, with $C$ being the collection of crossing circles.
Arrange $L$ in the following way:
place the circle components in $C$ perpendicular to
the projection plane $\torus \times \{0\}$,
and leave the remaining part of the link $K \subseteq L$
lying in the projection plane (except at crossings of $K$).
Thus, the projection of $L$ onto the projection plane
will be a diagram $D(K)$ of $K$
together with line segments corresponding to crossing circles.

We now place a \emph{crossing edge} at each crossing of $K$,
connecting the top and bottom strands at the crossing
(see \figref{f:crossing-edges}).
We also three horizontal edges for each crossing circle
(see \figref{f:cut-slice-flatten}, leftmost diagram).

\begin{figure} 
\begin{tikzpicture}
\begin{scope}[shift={(0,0)}]
\draw (2,-1)
	.. controls +(30:0.7cm) and +(-150:0.3cm) .. (3,-0.7)
	.. controls +(30:0.3cm) and +(-150:0.7cm) .. (4,0);
\draw[overline] (2,0)
	.. controls +(-30:0.7cm) and +(150:0.3cm) .. (3,-0.3)
	.. controls +(-30:0.3cm) and +(150:0.7cm) .. (4,-1);
\draw (0,1)
	.. controls +(-30:0.7cm) and +(150:0.3cm) .. (1,0.3)
	.. controls +(-30:0.3cm) and +(150:0.7cm) .. (2,0);
\draw[overline] (0,0)
	.. controls +(30:0.7cm) and +(-150:0.3cm) .. (1,0.7)
	.. controls +(30:0.3cm) and +(-150:0.7cm) .. (2,1);
\draw[color=red,midarrow={0.7}] (1,0.7) -- (1,0.3);
\draw[color=red,midarrow={0.7}] (3,-0.3) -- (3,-0.7);
\node[inner sep=0] (x) at (0.7,-0.7) {\tiny crossing};
\node[inner sep=0] (y) at (0.7,-1.0) {\tiny edges};
\draw[line width=0.4pt] (x) -- (1,0.5);
\draw[line width=0.4pt] (x) -- (3,-0.6);
\draw[->] (-0.3,0) -- (-1.3,0);
\node at (-0.8,0.25) {\small bottom};
\draw[->] (4.3,0) -- (5.3,0);
\node at (4.8,0.2) {\small top};
\end{scope}
\begin{scope}[shift={(5.5,0)}]
\node[emptynode] (a) at (2.6,-0.75) {};
\node[emptynode] (b) at (3.4,-0.4) {};
\node[emptynode] (c) at (1.4,0.25) {};
\node[emptynode] (d) at (0.6,0.6) {};
\draw (2,-1)
	.. controls +(30:0.7cm) and +(180:0.1cm) .. (a);
\draw (b)
	.. controls +(50:0.1cm) and +(-150:0.7cm) .. (4,0);
\draw (2,0)
	.. controls +(-30:0.7cm) and +(150:0.3cm) .. (3,-0.4)
	.. controls +(-30:0.3cm) and +(150:0.7cm) .. (4,-1);
\draw (0,1)
	.. controls +(-30:0.7cm) and +(130:0.1cm) .. (d);
\draw (c)
	.. controls +(0:0.1cm) and +(150:0.7cm) .. (2,0);
\draw (0,0)
	.. controls +(30:0.7cm) and +(-150:0.3cm) .. (1,0.6)
	.. controls +(30:0.3cm) and +(-150:0.7cm) .. (2,1);
\draw[color=red,midarrow={0.7}] (3,-0.4) -- (a);
\draw[color=red,midarrow={0.7}] (3,-0.4) -- (b);
\draw[color=red,midarrow={0.7}] (1,0.6) -- (c);
\draw[color=red,midarrow={0.7}] (1,0.6) -- (d);
\end{scope}
\begin{scope}[shift={(-1.5,0)},rotate=180]
\node[emptynode] (a) at (2.6,-0.75) {};
\node[emptynode] (b) at (3.4,-0.4) {};
\node[emptynode] (c) at (1.4,0.25) {};
\node[emptynode] (d) at (0.6,0.6) {};
\draw (2,-1)
	.. controls +(30:0.7cm) and +(180:0.1cm) .. (a);
\draw (b)
	.. controls +(50:0.1cm) and +(-150:0.7cm) .. (4,0);
\draw (2,0)
	.. controls +(-30:0.7cm) and +(150:0.3cm) .. (3,-0.4)
	.. controls +(-30:0.3cm) and +(150:0.7cm) .. (4,-1);
\draw (0,1)
	.. controls +(-30:0.7cm) and +(130:0.1cm) .. (d);
\draw (c)
	.. controls +(0:0.1cm) and +(150:0.7cm) .. (2,0);
\draw (0,0)
	.. controls +(30:0.7cm) and +(-150:0.3cm) .. (1,0.6)
	.. controls +(30:0.3cm) and +(-150:0.7cm) .. (2,1);
\draw[color=red,midarrow_rev={0.7}] (3,-0.4) -- (a);
\draw[color=red,midarrow_rev={0.7}] (3,-0.4) -- (b);
\draw[color=red,midarrow_rev={0.7}] (1,0.6) -- (c);
\draw[color=red,midarrow_rev={0.7}] (1,0.6) -- (d);
\end{scope}
\end{tikzpicture}
\caption{Splitting and flattening crossing edges.}
\label{f:crossing-edges}
\end{figure}
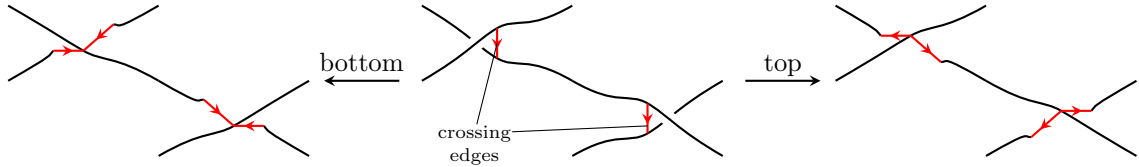

View the link from the point at infinity from the top end
($\torus \times \{1\}$) of the thickened torus.
At each crossing of $K$,
push the top strand towards the bottom strand,
splitting the crossing edge into two
identical edges and spreading them apart as in
\figref{f:crossing-edges}.
Note that after this operation, the link (which is no longer a link)
for both top and bottom look the same,
but a crossing edge associated to a crossing
is pushed in different directions;
this contributes to a ``$2\pi/n$ twist''
when gluing back the faces of the top and bottom torihedra.

Now for each crossing circle $c$, consider a spanning
(twice-punctured) disk $B_c$.
The following operations are depicted in \figref{f:cut-slice-flatten}.
$B_c$ intersects the projection plane $\torus \times \{0\}$,
cutting $B_c$ into two pieces $B_c^+,B_c^-$,
each being an ideal triangle.
We then slice along the disk $B_c$, turning it into two copies,
$B_c^{(1)},B_c^{(2)}$ (in no particular order);
each copy $B_c^{(j)}$ is also cut horizontally into two pieces,
$B_c^{(j),+},B_c^{(j),-}$.
We untwist all crossings in the twist region
which $c$ encircles,
rotating $B_c^{(j)}$ by $180^\circ$ for each crossing.
Then we flatten the disks;
the crossing circle is shrunk to a point,
as it is at infinity.

\begin{figure}[ht]
\input{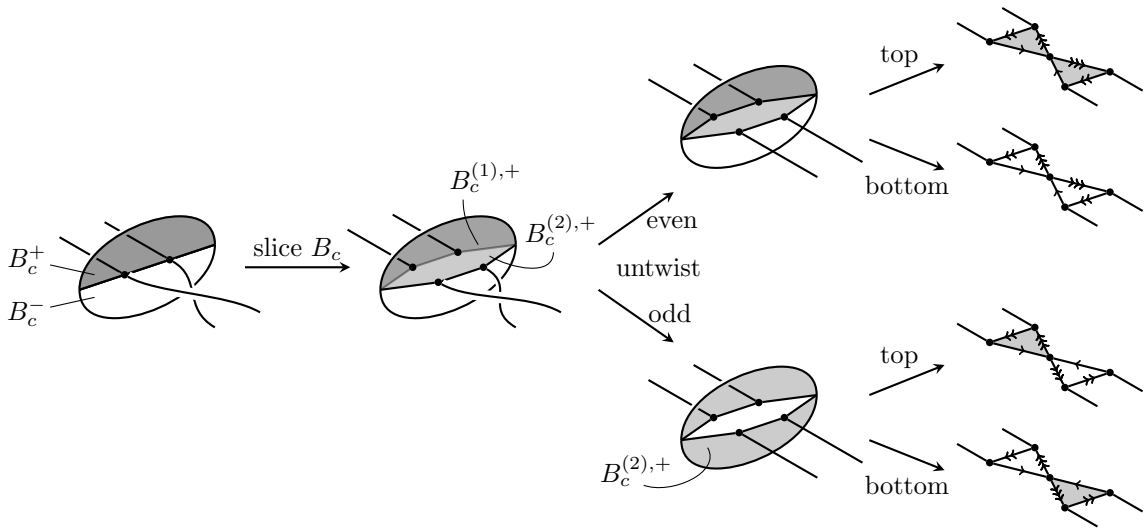}
\caption{Cut-slice-flatten;
the even and odd refer to the number of crossings
in that twist region
(we only draw one crossing in the first diagram);
the top and bottom graphs are glued back,
gray face to gray face, white face to white face;
note that the graph does not depend on the
parity of the number of crossings, but the gluing is different}
\label{f:cut-slice-flatten}
\end{figure}

Finally, we shrink all remaining segments of the link $L$
to ideal vertices.
It is easy to see that the top and bottom graphs
are exactly $\Gamma_T(L)$ and $\Gamma_B(L)$ from
\defref{d:tori-decomp-graph}.
To recover the link complement,
we glue bow-tie to bow-tie as described in \figref{f:cut-slice-flatten},
and glue each non-bow-tie face to its natural counterpart
(see \defref{d:tori-decomp-graph}),
with a ``$2\pi/n$'' twist as discussed before.

\end{proof}

The Figures \ref{fig:step_one} to \ref{fig:top-bottom}
depict an example which decomposes the link (C) of
\figref{fig:Augmentations}.

\begin{figure}[h] 
\centering
\includegraphics[height=3cm]{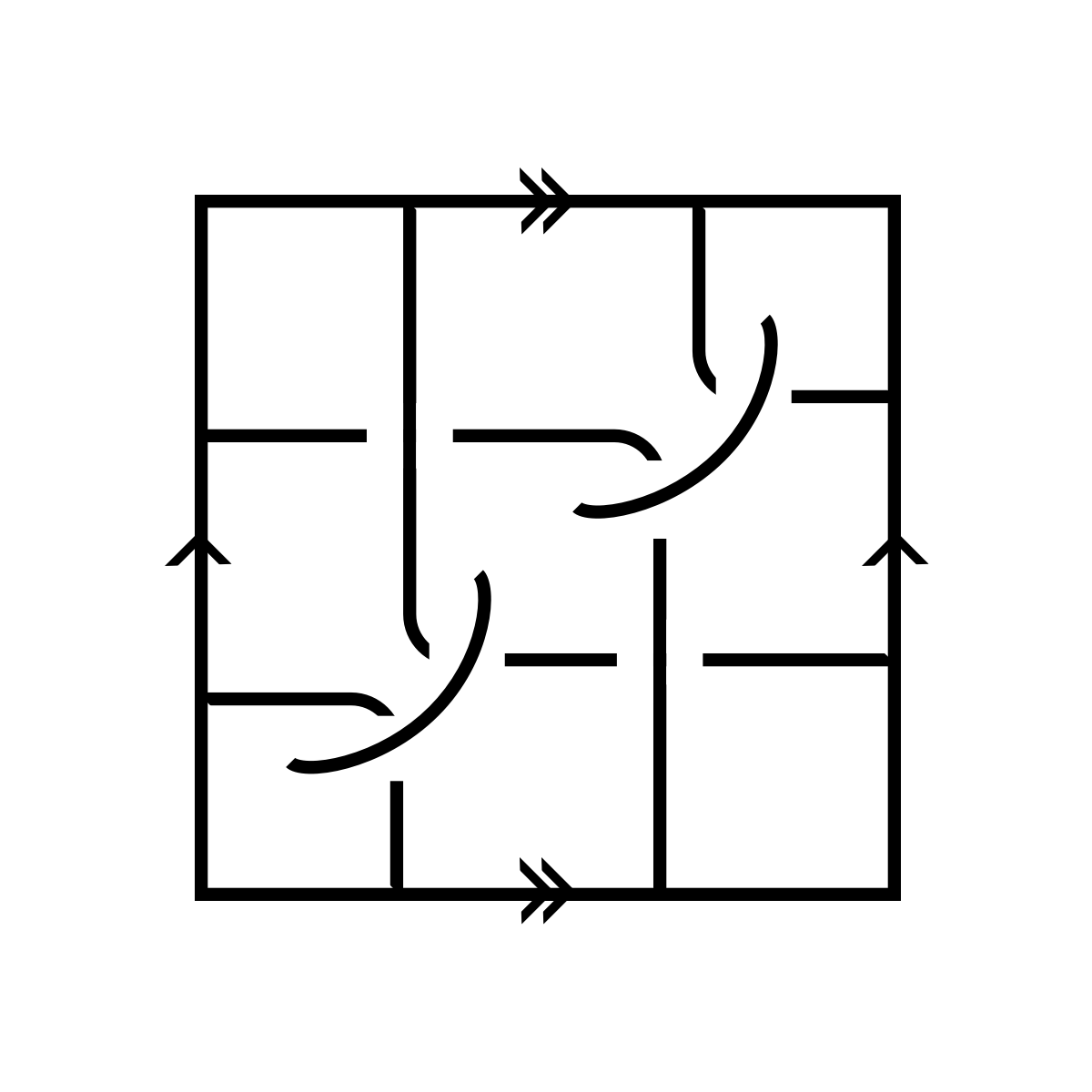}
\includegraphics[height=3cm]{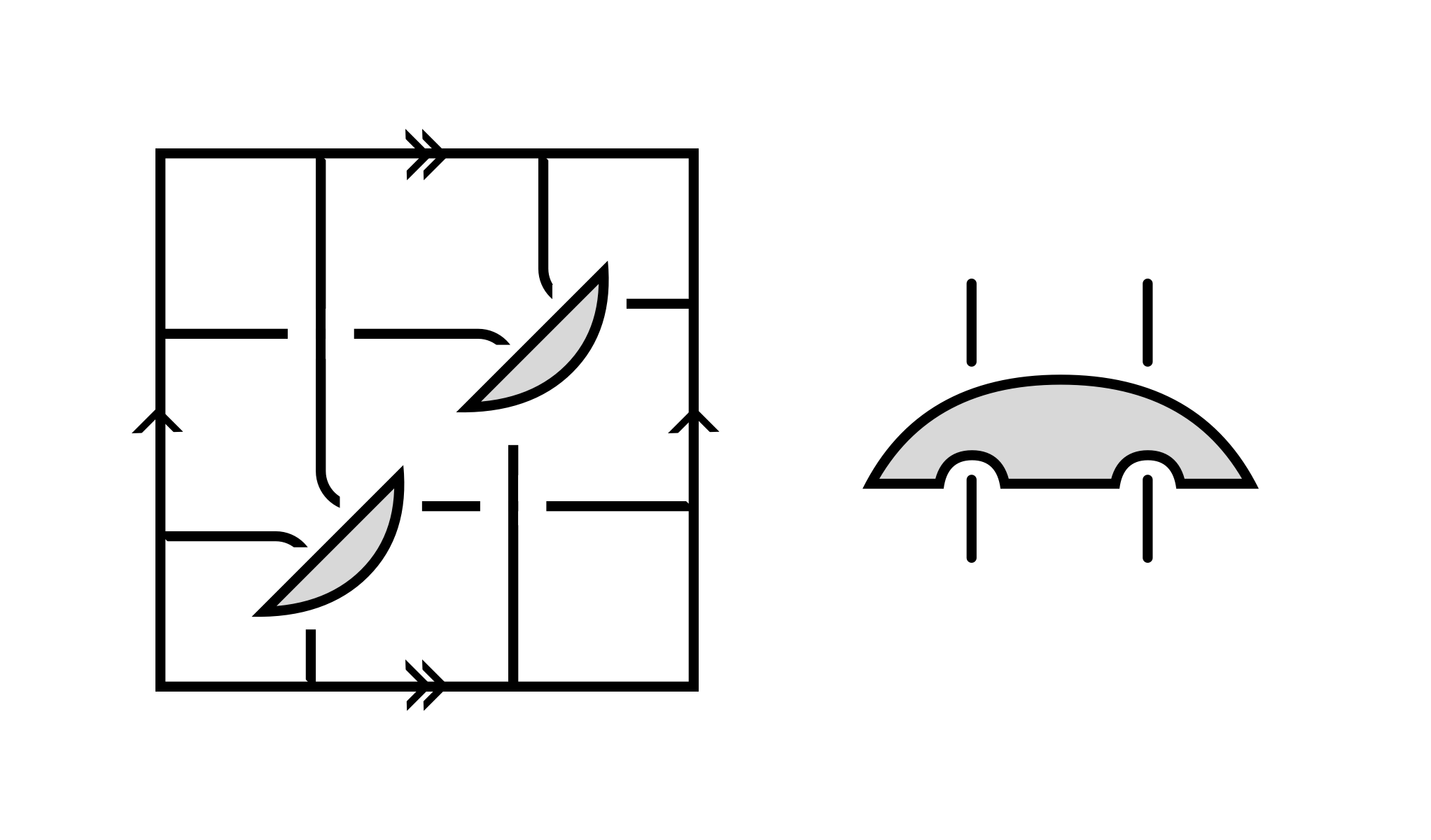}
	\caption{Each crossing circle bounds a twice-punctured disk;
	the rightmost figure shows half of the disk.}
	\label{fig:step_one}
\end{figure}
 
\begin{figure}[h] 
\centering 
\includegraphics[height=3cm]{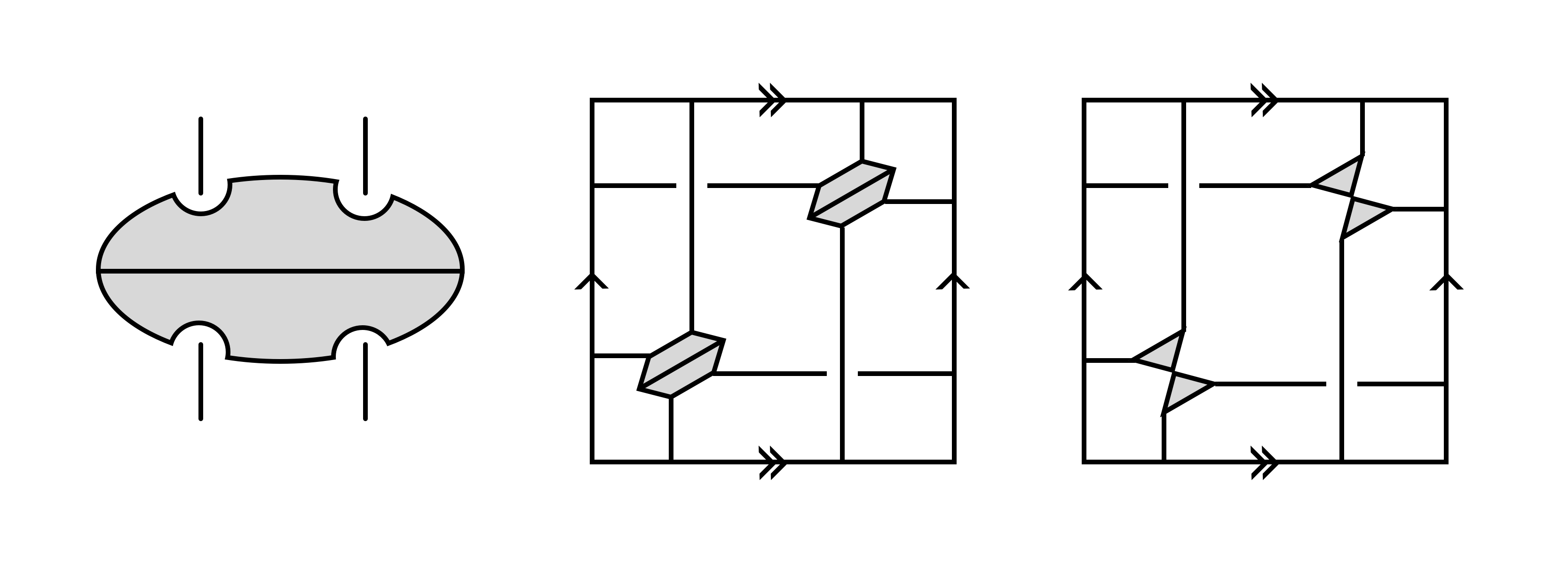} 
	\caption{We split the disk and collapse the arc of each
 crossing circle to ideal vertices;
 the leftmost figure shows the view from the top
 of a half disk being spread apart.}
	\label{fig:step_two}
\end{figure}

\begin{figure}[h] 
\centering 
\includegraphics[height=3cm]{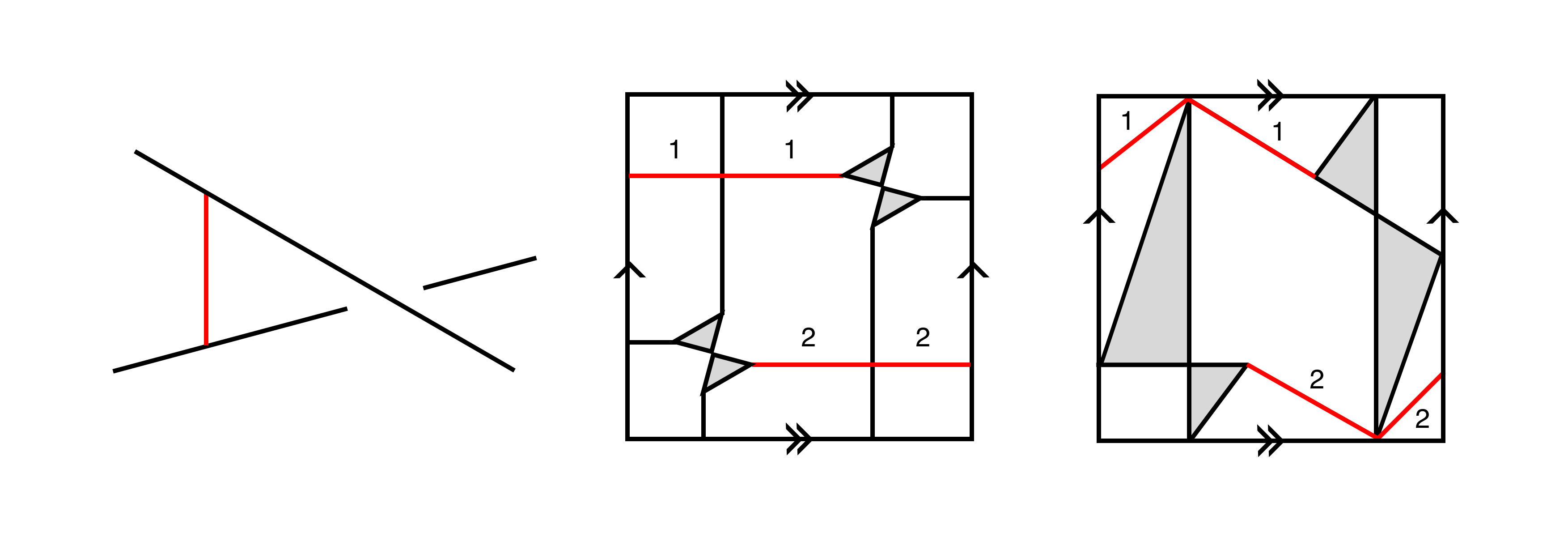} 
\caption{Left: The crossing arc is the edge in red.
Middle: Picture of splitting the crossing edge
and bow-ties.
Right: The link components are pushed off to infinity
(diagrammatically, they are shrunk to points).}
	\label{fig:step_three}
\end{figure}

\begin{figure}[h] 
\centering 
\includegraphics[height=3cm]{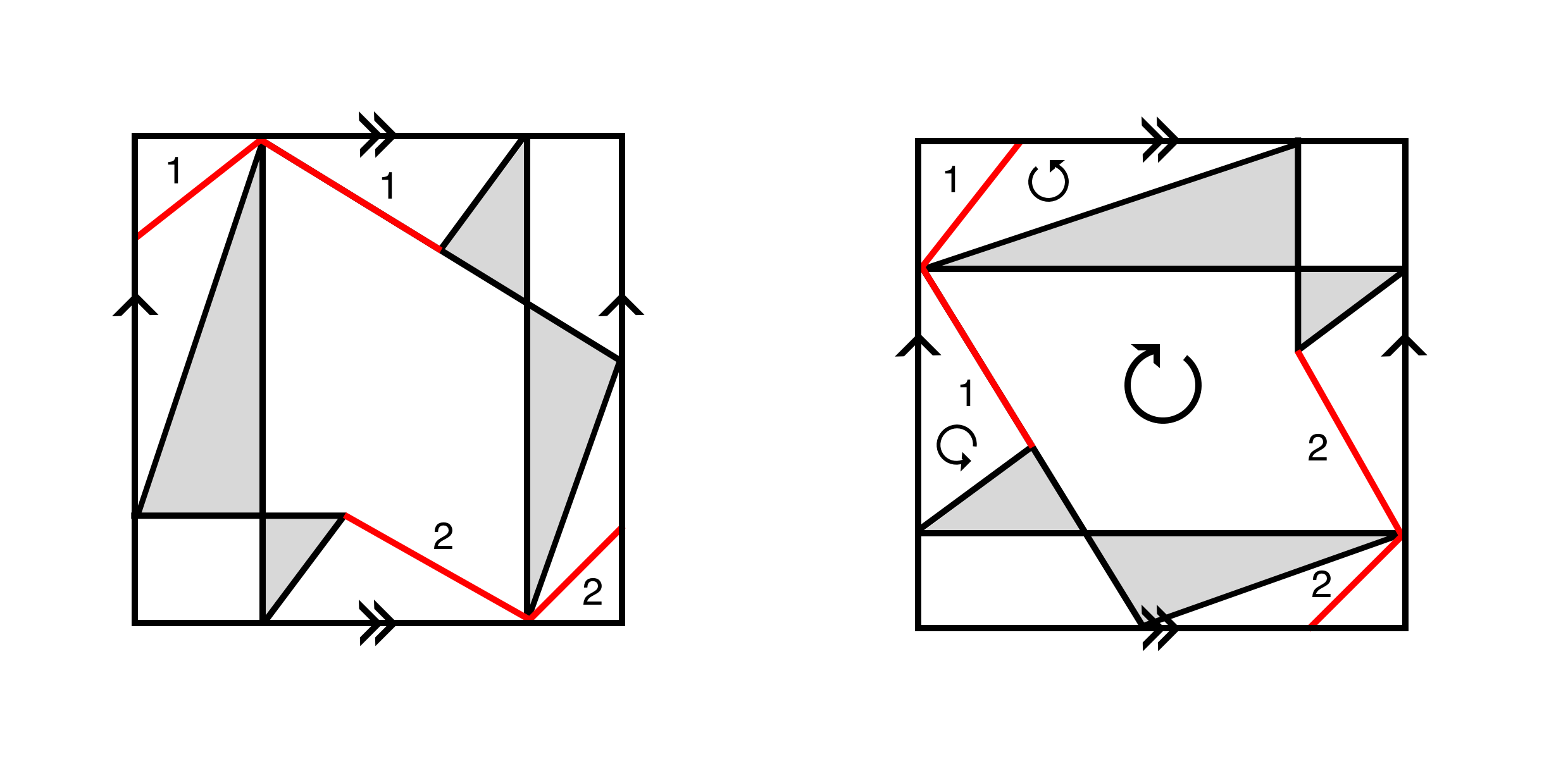} 
	\caption{Left: The top torihedron.
	Right: The bottom torihdron with rotation indicating face gluing}
\label{fig:top-bottom}
\end{figure}

\begin{remark}
We note that our main \thmref{t:auglink_hyp}
requires that all non-trivial twist regions be augmented,
but \prpref{p:tori_decomp} does not require it.
\label{r:unnecessary-augment-bigon}
\end{remark}

\begin{definition}
An \emph{angled torihedron} $(\sT, \theta_\bullet^*)$
is a torihedron $\sT$ with
an assignment of an \emph{interior dihedral angle}
$\theta_e^* \in [0,\pi]$ to each edge $e$ of $G(\sT)$
such that for each vertex $v \in G(\sT)$,
$\sum_{e \ni v} \theta_e^* = (\deg(v) - 2)\pi$.
We also denote $\theta_e = \pi - \theta_e^*$,
so $\sum_{e \ni v} \theta_e = 2\pi$;
we refer to $\theta_e$ as the \emph{exterior dihedral angle}.
For brevity, we write dihedral angle to mean 
interior dihedral angle.

We say $(\sT, \theta_\bullet^*)$ is \emph{degenerate}
if $\theta_e^* = 0$ for some edge;
we say it is \emph{non-degenerate} otherwise.
\end{definition}

One may ask for the pyramidal decomposition of a torihedron
to ``respect" angles. The following definitions,
in particular an ``angle splitting'', make sense of this.

\begin{samepage}
\begin{define}
An \emph{angled ideal tetrahedron} is an ideal tetrahedron
with an assignment of an
interior dihedral angle $\theta_e^*$ to each edge $e$, such that
\begin{itemize}
\item each dihedral angle is in $[0, \pi]$;
\item for each tetrahedron, opposite edges have equal dihedral angles;
\item the three distinct interior angles at edges incident to one vertex sum to $\pi$.
\end{itemize}

We say an angled ideal tetrahedron is \emph{degenerate} if
one dihedral angle is 0; we say it is \emph{non-degenerate} otherwise.
\end{define}
\end{samepage}

\begin{define}
A \emph{base-angled ideal pyramid}
is a pyramid whose base is an $n$-gon, $n \geq 3$,
and each boundary edge $e_i$ of the base face is assigned a dihedral angle
$\alpha_i \geq 0$ such that their sum is $\sum \alpha_i = \pi$.
The vertical edge $e_i'$ that meets $e_i$ and $e_{i+1}$
is automatically assigned the dihedral angle $\pi - \alpha_i - \alpha_{i+1}$.

We say a base-angled ideal pyramid is \emph{degenerate} if
$\alpha_i = 0$ for some $i$; we say it is \emph{non-degenerate} otherwise.
\end{define}

Clearly, the dihedral angles of an ideal hyperbolic pyramid
make it a base-angled ideal pyramid
(with $\alpha_i = \vphi_{e_i}$);
it is not hard to see that the converse is true:
simply consider a circumscribed polygon such that the side $e_i$
subtends an angle of $2\alpha_i$ at the center,
and take the ideal hyperbolic pyramid over it in upper-half space.
Also, an angled ideal tetrahedron is simply a base-angled ideal pyramid
with base a triangle, and with no preferred face.

\begin{definition}
An \emph{angle-splitting} of an angled torihedron $(\sT,\theta_\bullet^*)$
is an assignment of an angle $\vphi_{\vec{e}}$ to each
oriented edge $\vec{e}$, such that

\begin{itemize}
\item for each edge $e$,
$\theta_e^* = \vphi_{\vec{e}} + \vphi_{\cev{e}}$,
where $\cev{e}$ is the opposite orientation on $e$,
\item for each face $f$,
$\sum_{\vec{e} \in \del f} \vphi_{\vec{e}} = \pi$,
where $\vec{e} \in \del f$ is the edge in the boundary of $f$
taken with outward-orientation
(see Convention \ref{cvn:ornt-edge}).
\end{itemize}

Equivalently, an angle-splitting is a decomposition of
$\sT$ into base-angled pyramids,
one for each face $F$ of $G(\sT)$, such that
the interior dihedral of the edge $\vec{e} \in \del F$
is $\vphi_{\vec{e}}$.

We also say that $\vphi_\bullet$ is an angle-splitting
of the edge-labeled graph $(G(\sT), \theta_\bullet^*)$.

We say that an angle-splitting is \emph{degenerate}
if $\vphi_{\vec{e}} = 0$ for some oriented edge $\vec{e}$;
it is \emph{non-degenerate} otherwise.
\end{definition}

\begin{convention}
The outward-orientation on the boundary of a face
is the orientation such that the face is to the left of the boundary.
An assignment/label on an oriented edge $\vec{e}$
(for example, $\vphi_{\vec{e}}$)
will usually be drawn to the left of that edge.
\label{cvn:ornt-edge}
\end{convention}

\begin{remark}
These $\theta$'s are the same as the $\theta$'s in
\cite{BandS},
and angle-splittings $\vphi_\bullet$'s
are the same as their ``coherent angle system''.
\end{remark}

\begin{lemma}
\label{l:pyramid_decomp}
Let $P_n$ be a base-angled ideal pyramid, and suppose we are given a
decomposition of the base face into triangles by adding new edges.  One gets an
obvious corresponding triangulation of $P_n$, where a new face is added for each
new edge. Then there is an assignment of a dihedral angle to each edge of each
ideal tetrahedron in this triangulation such that
\begin{itemize}
\item each tetrahedron is an angled ideal tetrahedron;
\item the sum of dihedral angles around each new edge is $\pi$;
\item the dihedral angles of the edges of the original base face are the same as
	before.
\end{itemize} 
Moreover, if $P_n$ is non-degenerate,
then the resulting angled tetrahedra are also non-degenerate.
\end{lemma}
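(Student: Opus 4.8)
The plan is to induct on the number $n$ of sides of the base polygon, peeling off one ``ear'' of the triangulated base at each step. For the base case $n=3$ there are no new edges and nothing to do: as recorded in the paragraph preceding the lemma, a base-angled ideal pyramid with triangular base \emph{is} an angled ideal tetrahedron, so the required assignment is already in hand and the non-degenerate clause is immediate. The induced triangulation of $P_n$ has as its new edges precisely the diagonals added in the base (each coned diagonal contributing one new interior face), so these are the edges whose angle sums must be checked.

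For the inductive step, take $n\ge 4$. Any triangulation of an $n$-gon by diagonals has an ear: a triangle $\Delta$ two of whose sides are consecutive polygon edges $e_i,e_{i+1}$ (indices mod $n$) and whose third side $d$ is a diagonal; since $e_i,e_{i+1}$ are the only polygon edges at the apex $w$ of $\Delta$, deleting $\Delta$ leaves a triangulated $(n-1)$-gon whose boundary replaces $e_i,e_{i+1}$ by $d$. Coning to the apex of the pyramid, $T:=\operatorname{cone}(\Delta)$ is a single ideal tetrahedron and the remaining coned triangles form the induced triangulation of the base-angled ideal pyramid $P_{n-1}$ over that $(n-1)$-gon. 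I would give $T$, viewed as a base-angled ideal pyramid with base $\Delta$, the base angles $\alpha_i$ at $e_i$, $\alpha_{i+1}$ at $e_{i+1}$, and $\pi-\alpha_i-\alpha_{i+1}$ at $d$ (these sum to $\pi$, so this is legitimate; by the $n=3$ case $T$ is then an angled ideal tetrahedron with dihedral angle $\alpha_i$ at $e_i$ and $\alpha_{i+1}$ at $e_{i+1}$); and I would give $P_{n-1}$ the base angles $\alpha_j$ at $e_j$ for $j\ne i,i+1$ together with $\alpha_i+\alpha_{i+1}$ at $d$ (again summing to $\pi$). Applying the inductive hypothesis to $P_{n-1}$ supplies dihedral angles on its induced triangulation.

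It remains to merge the two assignments and verify the three bullets and the non-degeneracy clause. Every tetrahedron of the triangulation of $P_n$ is either $T$ or a tetrahedron of $P_{n-1}$'s triangulation, so all are angled ideal. The new edges of $P_n$'s triangulation are $d$ together with the new edges of $P_{n-1}$'s; around the latter the angle sum is $\pi$ by induction, while around $d$ the tetrahedron $T$ contributes $\pi-\alpha_i-\alpha_{i+1}$ and the tetrahedra of $P_{n-1}$ meeting $d$ contribute (by the ``base edges keep their angles'' clause of the inductive hypothesis, as $d$ is a base edge of $P_{n-1}$) exactly $\alpha_i+\alpha_{i+1}$, giving $\pi$. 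Each original base edge $e_j$ of $P_n$ lies on a single base triangle, hence in a single tetrahedron: for $j\ne i,i+1$ that tetrahedron belongs to $P_{n-1}$ and carries $\alpha_j$ by induction; for $j=i,i+1$ it is $T$, carrying $\alpha_i$ resp.\ $\alpha_{i+1}$. Finally, if $P_n$ is non-degenerate then all $\alpha_j>0$, hence $\alpha_i+\alpha_{i+1}>0$ and, since $n\ge4$ and $\sum_j\alpha_j=\pi$, also $\pi-\alpha_i-\alpha_{i+1}=\sum_{j\ne i,i+1}\alpha_j>0$; so $T$ and $P_{n-1}$ (with the above base angles) are non-degenerate, and induction closes the argument.

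The argument is essentially bookkeeping; the points that need care are (i) fixing that ``new edge'' means precisely the base diagonals — the vertical edges $\operatorname{cone}(w)$ over polygon vertices are edges of $P_n$ itself and are constrained by none of the three conditions — and (ii) knowing that $T$, once given the prescribed base angles, really satisfies the opposite-edges-equal and vertex-sum conditions, which is exactly the identification of base-angled triangular pyramids with angled ideal tetrahedra noted before the lemma. For intuition one can instead realize $P_n$ as an honest ideal hyperbolic pyramid over a cyclic $n$-gon (the construction in that same paragraph), cone the geodesic triangulation of the base, and read off the genuine hyperbolic dihedral angles: then ``sum $=\pi$ around a new edge'' is just flatness of the totally geodesic base face, and non-degeneracy is automatic when no $\alpha_i$ vanishes — the induction above being the combinatorial shadow of this picture that also handles the degenerate case.
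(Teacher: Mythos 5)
Your proof is correct and follows essentially the same approach as the paper's: induct on $n$, peel off an ear of the base triangulation to form a tetrahedron $T$ with base angles $\alpha_i,\alpha_{i+1},\pi-\alpha_i-\alpha_{i+1}$, apply the inductive hypothesis to the residual base-angled pyramid $P_{n-1}$ with the diagonal assigned $\alpha_i+\alpha_{i+1}$, and check the three bulleted conditions together with non-degeneracy. The only discrepancies are that you explicitly justify the existence of an ear and explicitly identify the ``new edges'' with the base diagonals (both points the paper takes for granted), while the paper additionally verifies the dihedral angle sums at the vertical edges — a property needed in the downstream application but not required by the lemma statement, and which in fact follows automatically from the three conditions via the opposite-edges-equal constraint.
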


\begin{proof}
Induct on $n$; there is nothing to prove for the base case $n=3$.

The proof is essentially given in Figure \ref{f:ideal_pyramid_arg}.
We spell it out here in words.

\begin{figure}
\includegraphics[height=5cm]{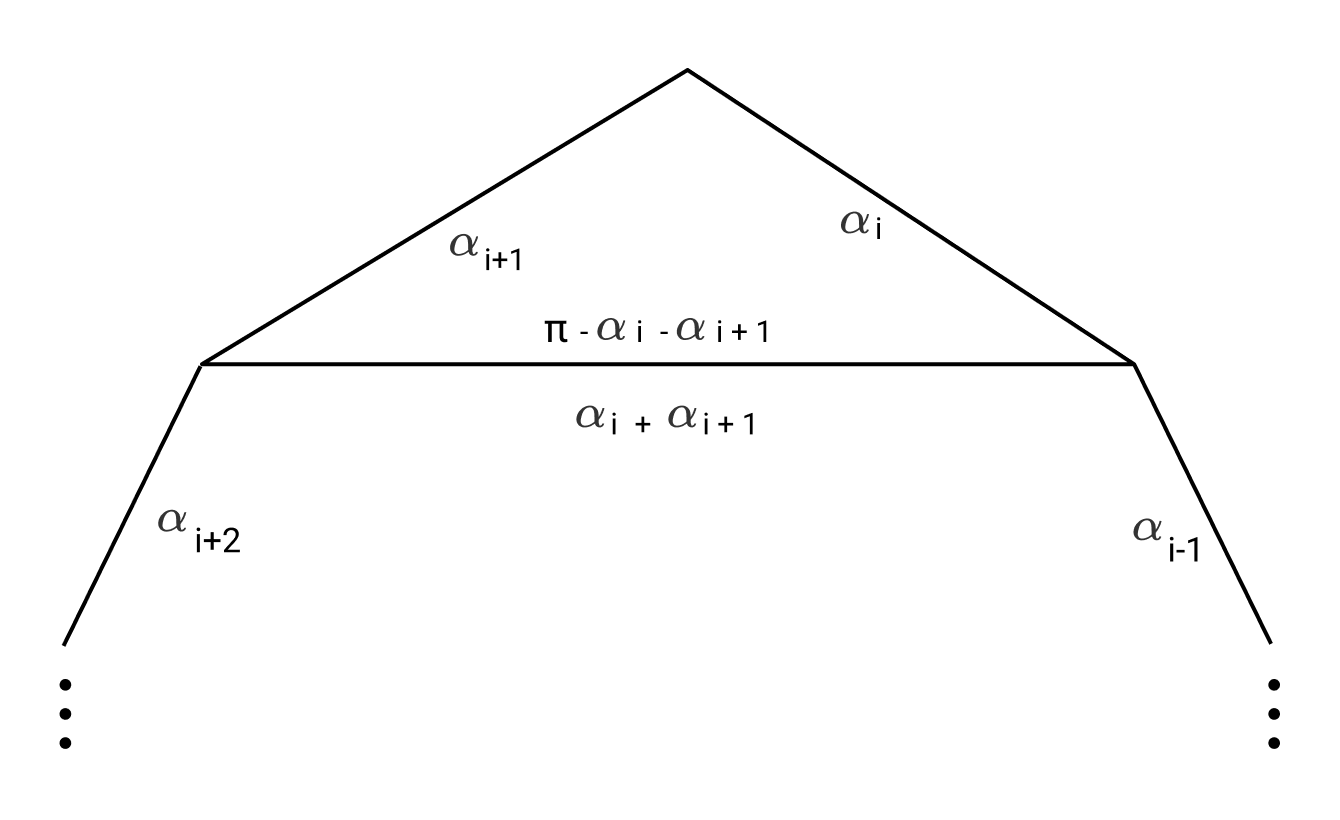}
\caption{Angle-splitting on a polygonal face of the graph}
\label{f:ideal_pyramid_arg}
\end{figure}

Suppose the edges are labeled $e_i$, for an edge
which goes between vertices $v_i$ and $v_{i+1}$,
and suppose $e_i$ is assigned dihedral angle $\alpha_i$.
Let $e'$ be a new edge added to the base face of $P_n$
such that it separates the base face into a triangle and
an $(n-1)$-gon;
suppose the sides of the triangle are
$e_i, e_{i+1}$, and $e'$.
The new face corresponding to $e'$ separates $P_n$ into
an ideal tetrahedron $T$ and an ideal pyramid $P_{n-1}$.
We assign the dihedral angle of $\pi - \alpha_i - \alpha_{i+1}$
to $e'$ in $T$, and assign $\alpha_i + \alpha_{i+1}$ to $e'$ in $P_{n-1}$.
Clearly the sum of dihedral angles condition is satisfied
in $T$ and $P_{n-1}$.
It remains to check that the dihedral angles assigned to the vertical (non-base)
edges are correct.
For the vertical edge associated to $v_j$ for $j \neq i, i+2$,
there is nothing to check;
for $j = i$, the dihedral angles are
$\pi - \alpha_i - (\pi - \alpha_i - \alpha_{i-1})$
in $T$ and $\pi - \alpha_{i-1} - (\alpha_i + \alpha_{i+1})$ in $P_{n-1}$,
which sum to $\pi - \alpha_i - \alpha_{i+1}$;
it is similar for $j = i+2$.

Non-degeneracy of the resulting angled tetrahedra
follows easily from the observation that the angles
assigned to each side of a new edge is simply the sum
of the angles of original edges on the other side.
\end{proof}

\section{Hyperbolicity of Augmented Links}
\label{s:hyperbolicity}
Thurston introduced a method for finding the
unique complete hyperbolic metric for a given 3-manifold $M$
with boundary consisting of tori \cite{Thurston}. 
Thurston wrote down a system of gluing and consistency equations
which can be translated to equations involving
angles for a triangulation of $M$ whose solutions correspond to the
complete hyperbolic metric on the interior of $M$.
Casson and Rivin separated Thurston's
gluing equations into a linear and non-linear part \cite{Casson-Rivin}.
Angle structures are solutions to the linear part of
Thurston's gluing equations.
By \thmref{t:hyperbolic-angle-str}
(\cite[Theorem 1.1]{FG-angles}),
to prove hyperbolicity of a link complement,
it suffices to find an angle structure
on a triangulation of the link complement.

\begin{define}
Let $M$ be an orientable 3-manifold with boundary consisting of tori.
An angle structure on an ideal triangulation $\tau$ of $M$
is an assignment of a dihedral angle to each edge
of each tetrahedron, such that
\begin{itemize}
\item each tetrahedron is a non-degenerate angled ideal tetrahedron,
\item around each edge of $\tau$, the dihedral angles sum to $2\pi$.
\end{itemize}
\end{define}


\begin{theorem}\cite[Theorem 1.1]{FG-angles}
\label{t:hyperbolic-angle-str}
Let $M$ be a 3-manifold with a triangulation that admits an angle structure.
Then $M$ is hyperbolic.
\end{theorem}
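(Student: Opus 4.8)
The plan is to recognize the complete hyperbolic metric on the interior of $M$ as the point where a volume functional attains its maximum over the space of angle structures, and then to show that this maximum lies in the interior of that space. This is the strategy of Casson, Rivin, and Thurston \cite{Casson-Rivin, Thurston}, with the decisive boundary estimate being the contribution of \cite{FG-angles}.

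First I would set up the \emph{space of angle structures} $\mathcal{A}(\tau) \subset \mathbb{R}^{3N}$, where $N$ is the number of tetrahedra of $\tau$ and each tetrahedron contributes its three dihedral-angle parameters. All the defining conditions — each angle in $(0,\pi)$, the three angles at each tetrahedron summing to $\pi$, and the edge equations requiring the dihedral angles around each edge of $\tau$ to sum to $2\pi$ — are affine, so the closure $\overline{\mathcal{A}(\tau)}$, where angles are allowed to lie in $[0,\pi]$, is a compact convex polytope, and the given angle structure exhibits its relative interior $\mathcal{A}(\tau)$ as nonempty. Next I would introduce the \emph{volume functional} $\mathcal{V}\colon \overline{\mathcal{A}(\tau)} \to \mathbb{R}$, where $\mathcal{V}(\theta)$ is the sum over all tetrahedra of the hyperbolic volume $\Lambda(x)+\Lambda(y)+\Lambda(z)$ of the ideal tetrahedron with dihedral angles $x,y,z$, and $\Lambda$ is the Lobachevsky function. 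Since $\Lambda$ is continuous on $[0,\pi]$ and strictly concave and real-analytic on $(0,\pi)$, $\mathcal{V}$ is continuous on the compact polytope — hence attains a maximum — and is strictly concave in the directions that change tetrahedron shapes, so an interior maximum is unique. By the theorem of Casson and Rivin \cite{Casson-Rivin}, building on \cite{Thurston}, a point of $\mathcal{A}(\tau)$ is a critical point of $\mathcal{V}|_{\mathcal{A}(\tau)}$ exactly when its angles solve Thurston's full gluing equations, and such a solution assembles the ideal tetrahedra into a complete hyperbolic metric on $\operatorname{int}(M)$. So everything reduces to showing that the maximum of $\mathcal{V}$ over $\overline{\mathcal{A}(\tau)}$ is attained at an interior point, i.e. with no flattened ($0$ or $\pi$) tetrahedron.

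The main obstacle, and the actual content of \cite{FG-angles}, is precisely ruling out a boundary maximum. Suppose $\mathcal{V}$ were maximized at $\theta_0 \in \partial\overline{\mathcal{A}(\tau)}$, so some tetrahedron has a vanishing angle. The plan is to exhibit a tangent direction $w$ to $\overline{\mathcal{A}(\tau)}$ at $\theta_0$, pointing into $\mathcal{A}(\tau)$, along which the one-sided directional derivative of $\mathcal{V}$ is $+\infty$, contradicting maximality. This exploits the singular behaviour $\Lambda'(t) = -\log|2\sin t| \to +\infty$ as $t \to 0^+$: pushing a near-zero angle upward produces an unbounded gain in volume. The candidate directions are the \emph{leading--trailing deformations} attached to the edges of $\tau$ (or, more generally, deformations supported on normal surfaces carried by $\tau$): these are explicit tangent vectors that automatically respect all the affine constraints, and, combined suitably, raise the angles of the degenerate tetrahedra adjacent to a chosen edge. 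The hard part is the combinatorial bookkeeping — showing that for \emph{every} possible configuration of flattened tetrahedra one can choose such a combination so that the $+\infty$ contributions genuinely appear and are not cancelled by the finitely many bounded (possibly negative) terms. One does this by analyzing how the locus of flattened tetrahedra sits inside $\tau$, ultimately arguing that the hypothesized boundary maximum would force a normal surface incompatible with the existence of the angle structure; this case analysis is the technical heart of the argument.

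Once the boundary is excluded, the maximum of $\mathcal{V}$ lies in $\mathcal{A}(\tau)$, is therefore a critical point of $\mathcal{V}|_{\mathcal{A}(\tau)}$, and by the Casson--Rivin characterization its angles solve Thurston's gluing equations, yielding a complete finite-volume hyperbolic structure on $\operatorname{int}(M)$. Since the ends of $M$ are the prescribed torus boundary components, $M$ is hyperbolic. I expect Step four — the exclusion of a boundary maximum — to be by far the most delicate part; the rest is formal once Rivin's identification of critical points with gluing-equation solutions is in hand.
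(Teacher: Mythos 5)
This theorem is imported by citation rather than proved in the paper, so the comparison has to be against the proof in the cited reference (Futer and Gu\'eritaud's survey), and there your proposal diverges in a fundamental way. Your strategy is the Casson--Rivin volume-maximization picture: maximize the volume functional $\mathcal{V}$ over $\overline{\mathcal{A}(\tau)}$, show the maximum is interior, and invoke the fact that an interior critical point solves Thurston's gluing equations. The step you flag as ``the actual content of the reference'' --- excluding a boundary maximum --- is exactly where the argument breaks: it is simply not true that $\mathcal{A}(\tau)\neq\emptyset$ forces the maximum of $\mathcal{V}$ to lie in the interior. There exist ideal triangulations of hyperbolic manifolds which admit angle structures but for which $\mathcal{V}$ attains its maximum on $\partial\overline{\mathcal{A}(\tau)}$, with some tetrahedron flattened; indeed, an interior maximum would force the triangulation to be geometric, which many triangulations with angle structures are not. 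What Casson--Rivin (and the relevant theorem in Futer--Gu\'eritaud, which is their Theorem 1.2, not 1.1) actually prove is the conditional statement: \emph{if} the maximizer is interior, it is a critical point and solves the gluing equations. They do not, and cannot without extra hypotheses, rule out boundary maxima, and the leading--trailing deformation analysis is used to establish the criticality-equals-gluing-equations equivalence, not to push the maximum off the boundary in general.

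The theorem as stated is Theorem 1.1 of that survey, due to Casson and Lackenby, and its proof is topological rather than variational. The angle structure is used to define a combinatorial area on normal disks so that for any normal surface $S$ one has a combinatorial Gauss--Bonnet identity expressing $-2\pi\chi(S)$ (plus boundary terms) as a sum of these areas, each of which is nonnegative and strictly positive except in tightly constrained cases. From this one shows $M$ contains no essential normal sphere, no essential normal torus, and no essential annulus, so $M$ is irreducible, atoroidal, anannular, and not Seifert fibered; Thurston's hyperbolization theorem for manifolds with torus boundary then produces the complete hyperbolic metric on the interior. So the correct proof uses normal surface theory plus hyperbolization, and obtains hyperbolicity without ever producing a geometric triangulation. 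If you want to keep the volume-maximization framing, you would have to either add a hypothesis guaranteeing an interior maximum or accept that it only proves the weaker conditional statement; as a proof of the theorem as cited, the proposal has a genuine gap at its central step.
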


For a hyperbolic link $K$ in $\torus \times I$, we show
that the link $L$ obtained from augmenting $K$ is hyperbolic.
The idea is to start with a graph from the torihedral decomposition
of the link $K$ which will give us a graph on each torihedron with an angle
assignment of $\pi/2$ to each edge \cite{CKP2}.
By \prpref{p:tori_decomp},
there is a torihedral decomposition of the complement of the augmented link $L$.
Using those angles from $K$,
we then assign new angles locally to edges of torihedra from a torihedral 
decomposition of $L$
and decompose them into base-angled pyramids which can be decomposed 
into tetrahedra, thus obtaining an angle structure on a triangulation.

We need the following theorem,
adapted from \cite[Theorem 4]{BandS},
specialized to genus 1 surfaces:

\begin{theorem}{\cite[Theorem 4]{BandS}}
Let $\Gamma = (V,E)$ be a graph on the torus,
and let $\check{\Gamma} = (F,\check{E})$ be the dual graph,
with $\check{E}$ being naturally identified with $E$.
Let $f \in (0,\pi)^E$
be a function on the set of edges $E$
that sums to $2\pi$ around each vertex of $V$;
let $f^*(e) = \pi - f(e)$.

There exists a non-degenerate
angle-splitting of $(\Gamma,f^*)$
if and only if the following is satisfied:

\begin{quotation}
Suppose we cut the torus along a subset of edges in the dual graph
$\check{\Gamma}$, obtaining one or more pieces;
Then for any piece that is a disk,
the sum of $f$ over the edges in the boundary
of the piece is at least $2\pi$,
with equality if and only if the piece
contains exactly one vertex of $\Gamma$.
\end{quotation}

\label{t:bs-thm4}
\end{theorem}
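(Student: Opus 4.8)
The plan is to translate the existence of a non-degenerate angle-splitting of $(\Gamma,f^*)$ into a strict feasibility problem for a balanced bipartite transportation network, extract the combinatorial criterion from a Hall-type max-flow/min-cut argument, and repackage the min-cut data as the stated picture of cutting the torus along dual edges, using $\chi(\torus)=0$. First one recasts the defining equations: an angle-splitting is an assignment $\vphi_{\vec e}\ge 0$ (strict positivity being exactly non-degeneracy) with $\vphi_{\vec e}+\vphi_{\cev e}=f^*(e)$ for every edge $e$ and $\sum_{\vec e\in\del F}\vphi_{\vec e}=\pi$ for every face $F$. The map sending an oriented edge $\vec e$ to the incidence consisting of the face to the left of $\vec e$ together with the underlying edge $e$ is a bijection from oriented edges to face--edge incidences; under it, the face equations say each face distributes a total amount $\pi$ to its boundary edges, and the edge equations say each edge $e$ receives a total amount $f^*(e)=\pi-f(e)$. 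Hence an angle-splitting is precisely a feasible flow in the transportation network whose supply nodes are the faces (supply $\pi$ each), whose demand nodes are the edges (edge $e$ has demand $\pi-f(e)$), and whose arcs are the incidences. This network is balanced: summing the hypothesis $\sum_{e\ni v}f(e)=2\pi$ over $v\in V$ gives $\sum_e f(e)=\pi|V|$, and $|V|-|E|+|F|=0$ on the torus forces $\sum_e f^*(e)=\pi(|E|-|V|)=\pi|F|=\sum_F\pi$.

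Second, I would obtain strict positivity from convexity together with a theorem of the alternative. The set $P$ of flows with $\vphi\ge 0$ is a bounded polytope (bounded since $0\le\vphi_{\vec e}\le f^*(e)<\pi$), and $P$ meets $\{\vphi>0\}$ if and only if $P\ne\emptyset$ and $P$ lies in no coordinate hyperplane $\{\vphi_{\vec e}=0\}$ --- the ``only if'' is clear and the ``if'' follows by averaging finitely many points of $P$. Both conditions are Hall-type conditions on the transportation network: by max-flow/min-cut one needs, for every subset $\mathcal F'$ of faces, $\sum_{e\in N(\mathcal F')}f^*(e)\ge\pi|\mathcal F'|$, equivalently $\sum_{e\in N(\mathcal F')}f(e)\le\pi(|N(\mathcal F')|-|\mathcal F'|)$, where $N(\mathcal F')$ is the set of edges incident to at least one face of $\mathcal F'$; and one needs this inequality to be strict except for cuts of a distinguished ``rigid'' type, which forces no zero arc. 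A violated or improperly saturated inequality exhibits a feasible flow with a forced zero arc, hence no non-degenerate angle-splitting; conversely, all these inequalities holding (strictly outside the rigid case) produces one.

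Third, I would pass to the topological description. Given $\mathcal F'$, cut the torus along the dual edges of the edges of $\Gamma$ on the frontier of $\bigcup_{F\in\mathcal F'}\bar F$; conversely, cutting along an arbitrary subset $\check E'\subseteq\check E$ and collecting the faces lying in one piece recovers an $\mathcal F'$ of this kind. For a piece $D$ that is a disk, a combinatorial Gauss--Bonnet count --- built from the Euler characteristic relation $p-q+r=1$ among the numbers $p,q,r$ of vertices, edges, faces of $\Gamma$ interior to $D$, together with the vertex relations $\sum_{e\ni v}f(e)=2\pi$ --- rearranges to $\sum_{e\in\del D}f(e)=2\pi p+(\text{a nonnegative term})$, whence $\sum_{e\in\del D}f(e)\ge 2\pi$, with equality exactly when $p=1$ and the extra term vanishes; this is precisely the rigid, saturated case of the previous step. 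Pieces with $\chi\le 0$ impose no constraint, which is why only disk pieces figure in the statement. Matching the two descriptions in both directions, and checking that the rigid saturated cuts are exactly those producing disk pieces with a single interior vertex, yields the asserted equivalence.

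The step I expect to be the real work is this last translation. A piece obtained by cutting along dual edges is generally not a union of whole faces, since a dual edge runs through the interiors of its two faces, so one must argue by an isotopy or retraction that a disk piece contributes to the count exactly like the union of the faces it meets. One must also allow an edge (equivalently its dual edge) to appear with multiplicity along a piece's boundary and verify that the taut cuts arising from genuine min-cuts avoid wasteful repeated crossings, so that the bound is sharp; and one must identify precisely which saturated linear inequalities force a zero arc in order to land on the clean ``equality if and only if the piece contains exactly one vertex'' clause rather than a weaker version. An alternative to the linear-programming route would be variational --- maximize a strictly concave entropy-type functional on $P$ and analyse its behaviour near $\del P$ --- but the boundary analysis requires the same combinatorial Gauss--Bonnet input, so this bookkeeping cannot be avoided.
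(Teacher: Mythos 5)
The paper does not prove this theorem at all: it cites Bobenko--Springborn's Theorem~4 verbatim and appends a one-sentence remark that the cited theorem produces a circle pattern combinatorially equivalent to $\Gamma$, and that a circle pattern yields a coherent angle system (here, an angle-splitting). Bobenko and Springborn's own proof is variational --- they minimize a convex functional whose critical points encode circle patterns --- so the ``hard direction'' of the theorem is a geometric/analytic existence result, and the combinatorial cut condition emerges as a compatibility criterion for that variational problem. Your proof is a genuinely different route: it bypasses circle patterns entirely and treats the angle-splitting directly as a strictly positive feasible point of a balanced bipartite transportation polytope, then extracts the cut criterion by LP duality / max-flow--min-cut. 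If it can be completed it is arguably more elementary and better adapted to the paper's actual use of the theorem (only the angle-splitting is needed downstream, not the circle pattern), at the cost of not producing the geometric realization or the uniqueness statement that the variational method gives for free.

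That said, your sketch has a gap precisely where you flag ``the real work.'' The Hall-type criterion you write down, $\sum_{e\in N(\mathcal F')}f^*(e)\ge\pi|\mathcal F'|$ over all face-subsets $\mathcal F'$, is the cut condition for \emph{feasibility} of the polytope $P$, but the theorem is an equivalence for the existence of a \emph{strictly positive} point, and its equality clause (``$=2\pi$ iff exactly one interior vertex'') is carrying all the weight in ruling out forced-zero coordinates. You assert that ``all these inequalities holding (strictly outside the rigid case) produces one,'' but you never characterize the ``rigid'' cuts, never show they coincide with disk pieces containing exactly one vertex of $\Gamma$, and never verify that the min-cuts realizing tightness are exactly the ones coming from disk pieces (rather than, say, disconnected face-subsets or pieces whose boundary traverses a dual edge twice). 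Likewise the combinatorial Gauss--Bonnet identity $\sum_{e\in\partial D}f(e)=2\pi p+(\text{nonnegative})$ is stated but not derived, and one needs it in a form where the error term vanishes precisely when $p=1$, which is not obviously what a raw Euler-characteristic count gives (one must track boundary-edge multiplicities and the primal vertices, edges, and faces cut off by the dual cut, which as you note are not cleanly a union of closed faces). These are not cosmetic: the ``if and only if'' is exactly what makes the theorem usable in the paper's main argument, and your outline currently proves neither implication of it. As a program it is sound and interesting, but as a proof it is not yet one.
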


The original theorem \cite[Theorem 4]{BandS}
proves that a circle pattern combinatorially equivalent to $\Gamma$
exists; a circle pattern naturally yields
an angle-splitting (which they call a coherent angle system).

\begin{theorem}
\label{t:auglink_hyp}
Let $K$ be a weakly prime, alternating link
in the thickened torus
whose diagram is cellular and has no bigons.
Let $L$ be a link obtained from augmenting $K$.
Then $L$ is hyperbolic.

More generally, if $K$ is as above with a twist-reduced diagram
containing bigons,
and $L$ is obtained by augmenting $K$
such that for every twist region with at least one bigon
is augmented,
then $L$ is hyperbolic.
\end{theorem}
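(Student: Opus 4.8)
The plan is to combine the torihedral decomposition of $L$ from \prpref{p:tori_decomp} with the angle-splitting existence criterion of \thmref{t:bs-thm4} and the pyramid-triangulation result \lemref{l:pyramid_decomp}, so that in the end we produce an honest angle structure on an ideal triangulation of $(\torus \times I) - L$ and invoke \thmref{t:hyperbolic-angle-str}. First I would fix, on each of the two bow-tie graphs $\Gamma_T(L)$ and $\Gamma_B(L)$, a candidate interior dihedral angle assignment $\theta_\bullet^*$: at the edges coming from the original crossing/twist-region structure of $K$ (the non-bow-tie edges) I would use the ``all $\pi/2$'' angles coming from the Champanerkar--Kofman--Purcell description of the complement of $K$ \cite{CKP2}, and at the bow-tie edges I would assign angles $0$ and $\pi$ appropriately — $\pi/2$ is the natural value forced by the two $\pi/2$'s around a degree-$4$ vertex, but a bow-tie vertex has degree $3$, so the diagonal, long, and short edges must receive, say, $\pi/2$ on the long edges and $0,\pi$ on the others, chosen so that the angle-sum condition $\sum_{e\ni v}\theta_e^* = (\deg v - 2)\pi$ holds at every vertex, including the (high-degree) cone point. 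The no-bigon / every-bigon-twist-region-augmented hypothesis on $K$ is exactly what guarantees that after collapsing augmented twist regions to single vertices in $D'$, the resulting graph has no bigon faces, which is needed for the angle sums to work out to strictly positive interior angles on the non-bow-tie part.

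Next I would feed $f = \theta_\bullet$ (the exterior angles, summing to $2\pi$ around each vertex) into \thmref{t:bs-thm4} to produce a \emph{non-degenerate} angle-splitting $\vphi_\bullet$ of each bow-tie graph. The content here is verifying the cut-criterion: for every subset of dual edges whose removal leaves a disk piece, $\sum f$ over the boundary edges of that piece is $\ge 2\pi$, with equality iff the piece contains exactly one vertex of $\Gamma$. This is where the hypotheses on $K$ — weakly prime, cellular, alternating, twist-reduced, and the augmentation condition — must be used: weak primeness rules out the ``two-edge disk'' that would violate the strict inequality, cellularity ensures the relevant curves actually bound disks (and that twist regions are arcs, not cycles, as noted after \defref{d:cellular}), and one must check that a disk piece straddling a bow-tie still has boundary $f$-sum $\ge 2\pi$ — here the bow-tie was engineered precisely so that the local $f$-values ($0$'s and $\pi$'s and $\pi$'s from the $\pi/2$ exterior... i.e.\ the $f$-values $\pi/2,\pi,\pi/2$ or similar) make small cuts either impossible or tight only on a single-vertex piece. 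I expect this verification — translating the combinatorial hypotheses on $K$ and the augmentation pattern into the Bobenko--Springborn cut inequality on $\Gamma_T(L)$ and $\Gamma_B(L)$ — to be the main obstacle.

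Given the non-degenerate angle-splittings $\vphi_\bullet^{T}$ and $\vphi_\bullet^{B}$, by the ``equivalently'' clause in the definition of angle-splitting, each splitting decomposes the top (resp.\ bottom) torihedron into non-degenerate base-angled ideal pyramids, one per face of $\Gamma_T(L)$ (resp.\ $\Gamma_B(L)$). I would then triangulate each base $n$-gon (arbitrarily, by adding diagonals) and apply \lemref{l:pyramid_decomp} to each pyramid to get non-degenerate angled ideal tetrahedra whose dihedral angles sum to $\pi$ around each new interior edge and which restore the prescribed $\vphi$'s on the original base edges. Finally I would check the global edge conditions on the combined triangulation of $(\torus \times I) - L$: around each edge coming from a face-diagonal the sum is $\pi + \pi = 2\pi$ (one $\pi$ from the top copy, one from the bottom); around each ``vertical'' pyramid edge over a vertex of $D'$ the sum is $\sum_{\vec e \ni v \text{ in } F}$-type contributions which, by the angle-splitting face-sum and vertex-sum conditions and the fact that top and bottom graphs share the same non-bow-tie faces, total $2\pi$; and around each edge arising from a crossing edge of $K$ one uses the ``$2\pi/n$ twist'' gluing from \prpref{p:tori_decomp} together with the $\theta^* = \pi/2$ choice to again get $2\pi$. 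Once all edge sums are $2\pi$ and every tetrahedron is non-degenerate, this is an angle structure, so \thmref{t:hyperbolic-angle-str} gives hyperbolicity of $L$; the ``more generally'' case with bigons is identical once the augmentation-at-every-bigon-twist-region hypothesis is used to ensure $D'$ is bigon-free.
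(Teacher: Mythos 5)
Your high-level plan (torihedral decomposition $\to$ angle assignment $\to$ Bobenko--Springborn angle-splitting $\to$ pyramid-to-tetrahedron triangulation $\to$ Futer--Gu\'eritaud) matches the paper's overall scheme, but there is a genuine gap at the angle-splitting step that the paper spends most of its effort circumventing and which your proposal does not address.

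You propose to assign angles $\theta^*$ on $\Gamma_T(L)$ with some bow-tie edges carrying $\theta^*=0$ or $\theta^*=\pi$, and then to feed $f=\theta$ directly into \thmref{t:bs-thm4} to get a non-degenerate angle-splitting of each bow-tie graph. This cannot work for two reasons. First, \thmref{t:bs-thm4} requires $f\in(0,\pi)^E$, i.e.\ all exterior angles strictly between $0$ and $\pi$; your bow-tie edges have $f=\pi$ (on short edges, where $\theta^*=0$) and $f=0$ (on long edges, where $\theta^*=\pi$), which violates the hypothesis. Second, and more fundamentally, no non-degenerate angle-splitting of $(\Gamma_T(L),\theta^*)$ with $\theta^*=0$ on short edges can exist: the defining condition $\theta_e^*=\vphi_{\vec{e}}+\vphi_{\cev{e}}$ with $\vphi\geq 0$ forces $\vphi_{\vec{e}}=\vphi_{\cev{e}}=0$ whenever $\theta_e^*=0$. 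So degeneracy on the short edges is unavoidable with the natural angles, not something Bobenko--Springborn can fix.

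The paper instead applies \thmref{t:bs-thm4} only to the graph $D'$ (where every edge honestly has $\theta^*=\pi/2\in(0,\pi)$), obtaining a non-degenerate splitting there; then performs the bow-tie modifications with the assignment long $=\pi$, short $=0$, diagonal $=\pi/2$ (note: this differs from your ``$\pi/2$ on long edges'' guess), yielding a splitting of $\Gamma_T(L)$ that is deliberately \emph{degenerate}. The crux of the argument is then a repair step you have omitted: new edges labeled $\theta^*=\pi$ are added across certain non-bow-tie faces, and an $\varepsilon$-perturbation with carefully chosen $\pm 1$ signs (Figure \ref{f:adding_edges}) pushes the zero $\vphi$'s off of $0$ while preserving the vertex-sum, face-sum, and top-bottom gluing consistency. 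Without this perturbation device, the triangulation you build would contain flat (degenerate) tetrahedra and \thmref{t:hyperbolic-angle-str} would not apply.

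Your remarks about verifying the cut-criterion using weak primeness and cellularity are on the right track, but they are only used for $D'$, not for the bow-tie graphs, in the paper's argument. The edge-sum checks you sketch at the end are broadly correct in spirit, but they need to be re-done after the new edges and the $\varepsilon$-perturbation have been introduced, which is where the sign bookkeeping does real work.
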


\begin{proof}
By \prpref{p:tori_decomp}, $\toruscomp{L}$ can be
obtained by gluing two torihedra $\sT_T(L),\sT_B(L)$
with graphs $\Gamma_T(L),\Gamma_B(L)$.

Recall that $\Gamma_T(L),\Gamma_B(L)$ are
obtained by bow-tie modifications of the diagram $D'$
of a link $K'$
(see \defref{d:tori-decomp-graph}).
Assign to each edge $e$ of $D'$ the angle $\theta_e = \pi/2$
(so that $\theta_e^* = \pi/2$ too).
This assignment has the property that
the sum of dihedral angles around each identified edge
(in the complement of $K'$) is $2\pi$.
This can be achieved because of the hypotheses on bigons
in our theorem statement.

Using the fact that $K'$ is weakly prime
(which easily follows from $K$ being weakly prime),
it is not hard to see that the condition on cocycles
of \thmref{t:bs-thm4} is satisfied by this assignment.
Thus, there exists a non-degenerate angle-splitting
$\vphi_\bullet$ of $(D',\theta_\bullet^*)$.

Now we perform the bow-tie modifications to obtain
$\Gamma_T(L),\Gamma_B(L)$.
For each step (i.e. each bow-tie modification),
we show how to modify the $\theta^*$ assignments
and how to get angle-splittings.
Say we perform such a modification
at some vertex $v$ and two edges $e^{(1)},e^{(2)}$.
We assign new $\theta_\bullet^*$ angles to
the resulting bow-tie modification graph
as in \figref{f:bowtie_angles}.
Note that the sum of $\theta$ (not $\theta^*$)
around each vertex is still $2\pi$.
\figref{f:bowtie_angles2} shows an angle-splitting
of this assignment.

\begin{figure}
\includegraphics[width=7cm]{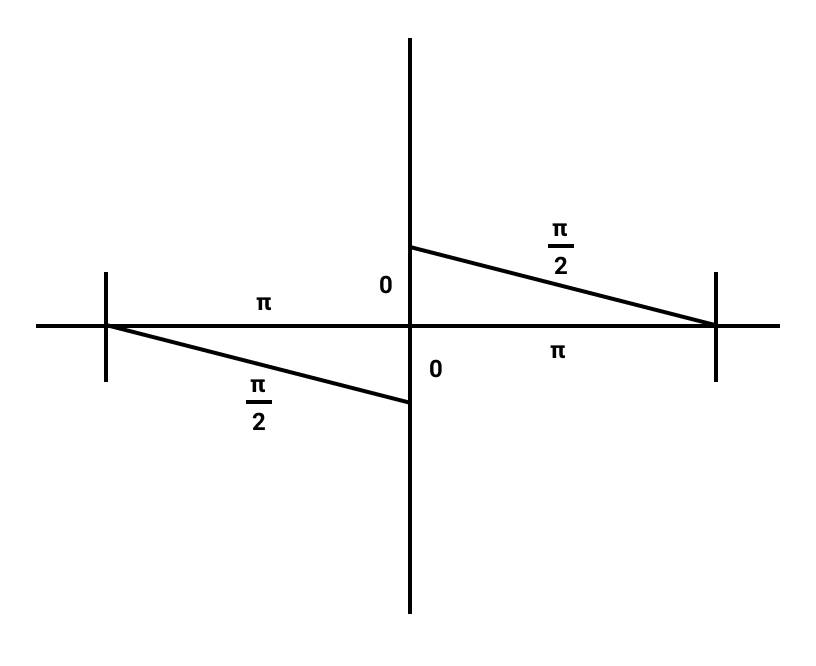}
\caption{Assignments of $\theta^*$ to edges of a bow-tie
	corresponding to a left augmentation site;
	the long edges are assigned $\pi$,
	the short edges are assigned 0,
	and the diagonal edges are assigned $\pi/2$
	(same for a right augmentation).
	}
\label{f:bowtie_angles}
\end{figure}

\begin{figure}
\begin{tabular}{cc}
\includegraphics[width = 5cm]{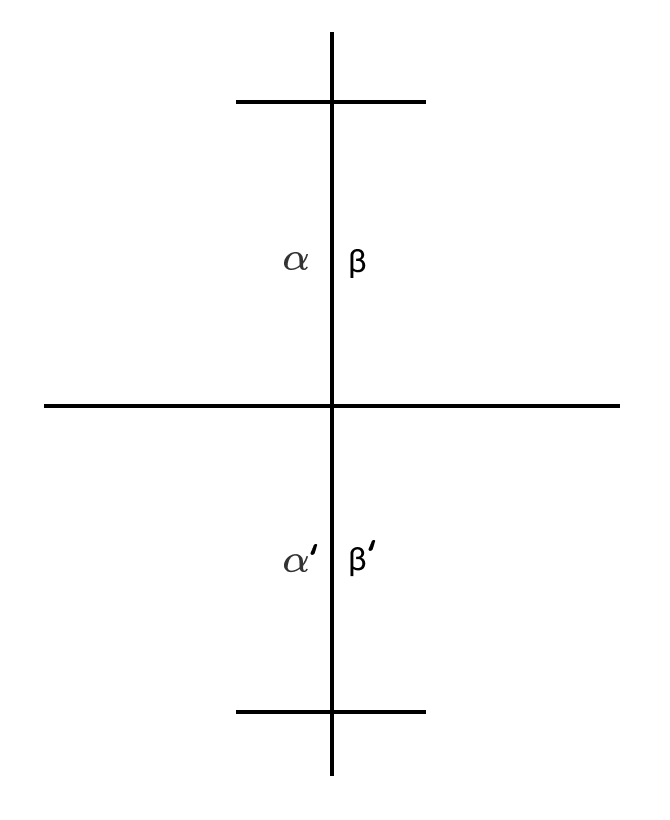}&
\includegraphics[width = 5cm]{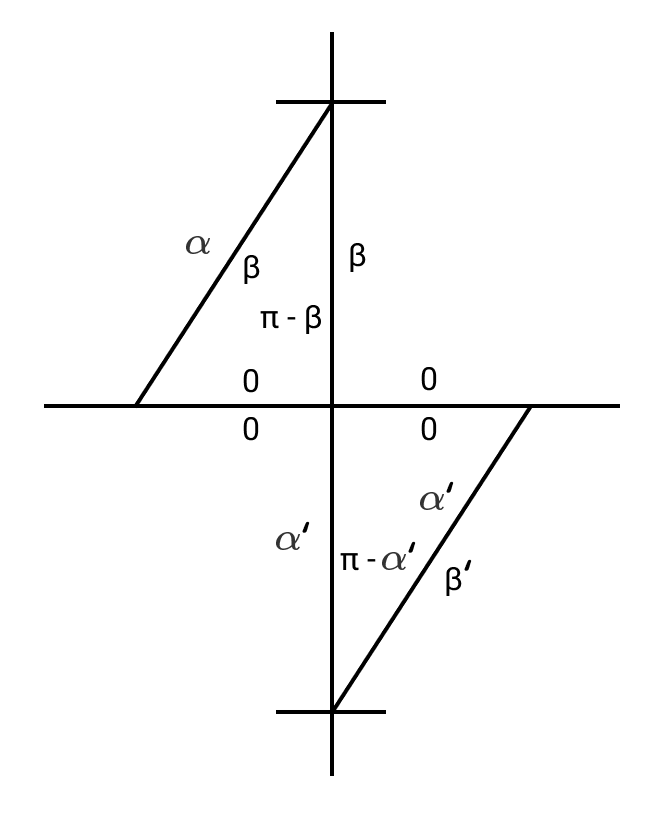}\\
(a)&(b)
\end{tabular}
\caption{(a) Angle splitting before augmentation 
(b) Angle splitting for bowtie corresponding to
left bow-tie modification/augmentation.
For right bow-tie modification/augmentation,
just flip both diagrams (a) and (b) horizontally.}
\label{f:bowtie_angles2}
\end{figure}

We check that upon gluing the top and bottom torihedra,
the sum of interior dihedral angles $\theta^*$
around each edge is $2\pi$:
crossing edges have $\theta^* = \pi/2$,
and appear four times, twice in each torihedron,
while for bow-tie edges,
simply check for half-twist and non-half-twist cases
separately.

Now we have a decomposition of the two torihedra into
degenerate base-angled pyramids
(recall that that means some of the
interior dihedral angles $\theta^*$ are 0);
since we need the pyramids to be non-degenerate,
we modify the graph on the torihedra
and the angle assignments to make all $\theta^*$ non-zero
as follows.

We first modify the graphs on the torihedra by adding edges
to them for some extra ``flexibility''.
Consider a face $f$ of $\Gamma_T(L)$ that is not from a bow-tie.
Suppose the corresponding face $\bar{f}$ of $D'$
had vertices $v_1,\ldots,v_n$ in counter-clockwise order.
Note that $f$ may meet a vertex twice,
but we label each occurrence with its own index.
We label the edges of $f$ by $e_{i,0}$, $e_{i,\pi}$, or $e_i$,
depending on whether the $\theta^*$ of that edge
is $0$, $\pi$, or $\pi/2$ respectively.
More precisely, for a vertex $v_i$ corresponding to a crossing
of $K$ that is not augmented,
we label $e_{v_i}^{(.)}$ by $e_i$
(here $e_{v_i}^{(.)}$ is $e_{v_i}^{(1)}$ or $e_{v_i}^{(2)}$,
whichever meets $\bar{f}$; see \defref{d:tori-decomp-graph}).
For a vertex $v_i$ that corresponds to a twist region of $K$,
if the crossing circle ``faces'' $f$,
then $f$ meets a diagonal edge of the bow-tie corresponding to $v_i$,
and we label it $e_i$
(for example if $f$ is the top left face
in \figref{f:bowtie_angles2} (a));
if not, then $f$ meets a short and long edge
of the corresponding bow-tie,
and we label them by $e_{i,0}$ and $e_{i,\pi}$,
respectively
(for example if $f$ is the top right face
in \figref{f:bowtie_angles2} (b)).

If $\bar{f}$ does not have vertices of the latter kind,
i.e. if $f$ does not meet short or long edges,
then we will not modify $f$.
So assume that $\bar{f}$ does have such a vertex,
and suppose it is right-augmented
(the other case is treated similarly).
Then for all right-augmented vertices $v_i$ of $\bar{f}$,
$f$ would meet the short, long edges $e_{i,0},e_{i,\pi}$,
while for all left-augmented vertices,
$f$ would meet the diagonal edges.
In particular,
the edges $e_{i,0}, e_{i,\pi}$ always appear in counter-clockwise order.

Suppose, after cyclically reindexing, $v_1,\ldots,v_k$
is a maximally contiguous subsequence of right-augmented vertices
of $D'$ around $\bar{f}$;
the edges around $f$ would start
$e_{1,0} \cm e_{1,\pi} \cm e_{2,0} \cm e_{2,\pi} \cm \ldots
	\cm e_{k,0} \cm e_{k,\pi} \cm \ldots$.
We add new edges across $f$ as follows
(see \figref{f:adding_edges};
ignore the + and - signs for now):

\begin{itemize}
\item \textbf{Case $k=n$:} (i.e. every vertex of $\bar{f}$ is right-augmented.)
In this case, we do nothing.

\item \textbf{Case: There is only one such maximal contiguous subsequence:}

\begin{itemize}
\item \textbf{Subcase: $k=1$:}
We add an edge that goes across $e_{1,0},e_{1,\pi},e_2$
(in the sense that the new edge separates the edges of $f$ into two sets,
one of them being those three edges;
since $n\geq 3$, this edge is new).

\item \textbf{Subcase: $k \geq 2$:}
We add an edge across $e_{1,0},e_{1,\pi}$
and another edge across $e_{2,0},e_{2,\pi},e_{3,0},\ldots,e_{k,\pi}$
(these two edges do not form a bigon because we've ruled out $k=n$).
\end{itemize}

\item \textbf{Case: There are multiple such maximal contiguous subsequences.}
We just add edges as in the previous case for each contiguous subsequence,
except one special case: when the edges of $f$ are exactly
$e_{1,0},e_{1,\pi},e_2,e_{3,0},e_{3,\pi},e_4$,
we add only one edge separating the first three edges from the other three.
(This prevents formation of a bigon.)
\end{itemize}

\begin{figure}
\begin{tikzpicture}
\begin{scope}[shift={(0,0)}]
\node[dotnode] (a) at (-1,2) {};
\node[dotnode] (b) at (-1,0.9) {};
\node[dotnode] (c) at (-0.1,0) {};
\node[dotnode] (d) at (1,0) {};
\draw (a) -- (b) -- (c) -- (d);
\draw[dotted] (a) -- +(60:1cm);
\draw (a) -- +(60:0.5cm);
\draw[dotted] (d) -- +(30:1cm);
\draw (d) -- +(30:0.5cm);
\draw[densely dotted] (a) to[out=-10,in=100] (d);
\node[inner sep=0, outer sep=0] (e10) at (-1.6,1.0) {\tiny $e_{1,0}$};
\draw[line width=0.4pt] (e10) -- (-1,1.1);
\node at (-1.15,1.45) {\tiny $+$};
\node at (-0.85,1.45) {\tiny $+$};
\node[inner sep=0, outer sep=0] (e1pi) at (-1.3,0.3) {\tiny $e_{1,\pi}$};
\draw[line width=0.4pt] (e1pi) -- (-0.8,0.7);
\node at (-0.65,0.35) {\tiny $-$};
\node at (-0.45,0.55) {\tiny $-$};
\node at (0.2,-0.15) {\tiny $e_2$};
\node at (0.4,0.15) {\tiny $+2$};
\node at (0.7,1.6) {\tiny new edge};
\node at (0.25,1.1) {\tiny $-2$};
\end{scope}
\begin{scope}[shift={(5,0)}]
\node[dotnode] (e) at (-1.2,1.6) {};
\node[dotnode] (a) at (-1,0.6) {};
\node[dotnode] (b) at (-0.1,0) {};
\node[dotnode] (c) at (1,0) {};
\node[dotnode] (d) at (2.3,1.9) {};
\draw (e) -- (a) -- (b) -- (c);
\draw[dotted] (e) -- +(80:0.9cm);
\draw (e) -- +(80:0.5cm);
\draw[dotted] (c) -- +(30:0.9cm);
\draw (c) -- +(30:0.5cm);
\draw[dotted] (d) -- +(-100:0.9cm);
\draw (d) -- +(-100:0.5cm);
\draw[dotted] (d) -- +(100:0.9cm);
\draw (d) -- +(100:0.5cm);
\draw[densely dotted] (e) to[out=-10,in=90] (b);
\draw[densely dotted] (b) to[out=70,in=180] (d);
\node[inner sep=0, outer sep=0] (e10) at (-1.8,0.6) {\tiny $e_{1,0}$};
\draw[line width=0.4pt] (e10) -- (-1.04,0.8);
\node at (-1.25,1.0) {\tiny $+$};
\node at (-0.95,1.1) {\tiny $+$};
\node[inner sep=0, outer sep=0] (e1pi) at (-1.6,0.2) {\tiny $e_{1,\pi}$};
\draw[line width=0.4pt] (e1pi) -- (-0.82,0.48);
\node at (-0.5,0.4) {\tiny $-$};
\node at (-0.6,0.2) {\tiny $-$};
\node[inner sep=0, outer sep=0] (e20) at (1,-0.6) {\tiny $e_{2,0}$};
\draw[line width=0.4pt] (e20) -- (0.8,0);
\node at (0.5,-0.15) {\tiny $+$};
\node at (0.5,0.15) {\tiny $+$};
\node[inner sep=0, outer sep=0] (e2pi) at (1.5,-0.3) {\tiny $e_{2,\pi}$};
\draw[line width=0.4pt] (e2pi) -- (1.2,0.1);
\node at (1.3,0.3) {\tiny $-$};
\node at (1.5,0.1) {\tiny $-$};
\node[inner sep=0, outer sep=0] (ekpi) at (2.9,1.7) {\tiny $e_{k,\pi}$};
\draw[line width=0.4pt] (ekpi) -- (2.25,1.7);
\node at (2.4,1.5) {\tiny $-$};
\node at (2.05,1.55) {\tiny $-$};
\node at (-0.3,1.3) {\tiny $-2$};
\node at (0.8,1.7) {\tiny $+2$};
\node[inner sep=0, outer sep=0] (n) at (0.4,2.5) {\tiny new edges};
\draw[line width=0.4pt] (n) -- (-0.7,1.4);
\draw[line width=0.4pt] (n) -- (1.5,1.8);
\end{scope}
\end{tikzpicture}
\caption{Adding edges to non-bow-tie faces;
the $+,-$ labels are short for $+1,-1$ respectively.}
\label{f:adding_edges}
\end{figure}
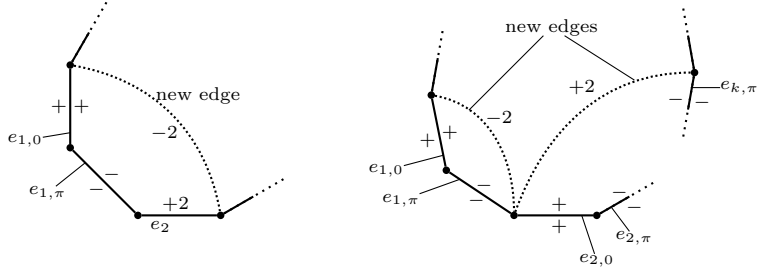

This way we obtain a new graph $\Gamma_T'$, which defines a
new torihedron $\sT_T'$.
We make $\sT_T'$ angled using the angles from $\sT_T(L)$ for old edges,
and putting $\theta^* = \pi$ for all new edges.

We can get an angle-splitting for $(\Gamma_T',\theta^*)$,
using the angle-splitting for $\Gamma_T(L)$ for old edges,
and for a new edge $e$ (as in \figref{f:adding_edges})
that cuts through some face $f$ of $\Gamma_T(L)$,
we assign $\vphi_{\vec{e}} = \sum \vphi_{\vec{e'}}$,
where the sum is over all the edges $\vec{e'} \in \del f$
on the other side of $\vec{e}$.
By non-degeneracy of the angle-splitting on $D'$,
these assignments to the new edges are all non-zero.

Now fix some small $\veps > 0$.
Let $\vphi_{\vec{e}}' = \vphi_{\vec{e}} + x \cdot \veps$,
where $x$ is the label
(in \figref{f:adding_edges}) on $\vec{e}$
(set $x=0$ if unlabeled).
Let $\theta_e'^* = \vphi_{\vec{e}}' + \vphi_{\cev{e}}'$.
It is easy to check that the sum of labels
around each vertex is 0,
hence $\theta'^*$
defines an angled torihedron $(\sT_T',\theta'^*)$.
For each face of $\Gamma_T'$, the $+/-$ labels
on the inner side of boundary edges cancel out:
for bow-tie faces, the short edge gets a $+1$ and the long edge
gets $-1$, while for non-bow-tie faces,
it is clear from \figref{f:adding_edges}.
Hence, $\vphi'$ is an angle-splitting of $(\sT_T',\theta'^*)$.
Furthermore, all shorts edges
(which are the only edges with $\vphi_{\vec{e}} = 0$)
have a $+1$ label on each side,
so $\vphi'$ is non-degenerate.

Now we perform the same operations for the bottom torihedron,
adding new edges to $\Gamma_B(L)$ to get $\Gamma_B'$
in the same manner;
note that left and right augmentations are switched,
so that the order of $e_{i,0},e_{i,\pi}$ are switched.
Thus all the $+/-$ labels in \figref{f:adding_edges}
should have switched signs.
We also get a nondegenerate angle-splitting $\vphi'$
of an angled torihedron $(\sT_B',\theta'^*)$.

By construction, under the gluing of
$\sT_T(L)$ to $\sT_B(L)$,
the new edges added to $\Gamma_T(L)$
are glued to the new edges added to $\Gamma_B(L)$,
since they are added by the same procedure.
As noted before, upon gluing $\sT_T(L)$ to $\sT_B(L)$
the sum of exterior dihedral angles $\theta^*$ around each edge
is $2\pi$.
This clearly remains true after adding the new edges
(they're labeled $\pi$ on each torihedron).
Again by construction,
the $+/-$ labels coming from the top and bottom diagrams get canceled out.
Thus, upon gluing $\sT_T'$ to $\sT_B'$,
the sum of new exterior dihedral angles $\theta'^*$
around each edge is still $2\pi$.

Finally, we obtain a triangulation with an angle structure as follows.
For each face of $\Gamma_T'$ that has more than three sides,
we arbitrarily decompose it into triangles
and apply \lemref{l:pyramid_decomp}
to obtain a triangulation of $\sT_T'$ into non-degenerate angled tetrahedra;
perform the corresponding decomposition for faces of
$\Gamma_B'$ and obtain a triangulation of $\sT_B'$
into non-degenerate angled tetrahedra.
These triangulations glue up into a triangulation of $\toruscomp{L}$
with an angle structure.
Thus, by \thmref{t:hyperbolic-angle-str},
$L$ is hyperbolic.
\end{proof}

\begin{remark}
By applying \thmref{t:auglink_hyp} judiciously,
one may prove hyperbolicity of augmented links
that do not satisfy its hypotheses as stated.


Say we have a link $K$ that satisfies
the stricter hypotheses of \thmref{t:auglink_hyp}
(no bigons).
Consider a crossing circle $C$ around two parallel strands
that do not meet at a crossing.
The addition of $C$ to $K$ is not an augmentation
as in \defref{d:augmentation},
and so \thmref{t:auglink_hyp} does not directly apply.
However, we may consider the related link $K''$
where those two strands have a full twist,
so that the addition of $C$ to $K''$ is an augmentation.
Now \thmref{t:auglink_hyp} applies to $K'' \cup C$,
and thus $K \cup C$ is also hyperbolic.

Note that augmenting in the sense of \defref{d:augmentation}
will not turn a non-weakly prime link into a weakly prime link,
but the above extended notion of augmenting might.

Another way to squeeze more out of \thmref{t:auglink_hyp}
is to consider augmenting twist regions
in the ``transverse direction''.
That is, suppose $K$ has only one twist region $T$,
which is left-handed,
but we perform a right-handed augmentation
at each vertex of $T$
(instead of a single left-handed augmentation, which is
required for \thmref{t:auglink_hyp}).
Consider the link $K''$ where each vertex of $T$
is replaced by a right-handed twist region of length 2
(two bigons).
Then the twist region $T$ ``is'' no longer a twist region
in $K''$, and we may freely apply \thmref{t:auglink_hyp})
and conclude that the ``transversely augmented'' $K$
is hyperbolic.
\end{remark}

\bibliographystyle{plain}
\bibliography{references-ak}


\end{document}